\newlength{\figurewidth}
\newlength{\figureheight}
\newcommand{\R}{{\operatorname{right}}}
\renewcommand{\L}{{\operatorname{left}}}
\renewcommand{\O}{{\operatorname{outer}}}
\newcommand{\ql}{L}
\newcommand{\qj}{J}
\newcommand{\qh}{H}
\newcommand{\rl}{{\scriptsize \Romanbar{4}} }
\newcommand{\dint}{{\rm d}}
\newtheorem{lemma}{Lemma}[section]
\numberwithin{equation}{section}
\makeatletter \@removefromreset{lemma}{section} \makeatother
\newcommand{\Ri}{\Romanbar{1}}
\newcommand{\Ro}{\Romanbar{2}}
\newcommand{\Rright}{\Romanbar{3}}
\newcommand{\Rleft}{\Romanbar{4}}
\newcommand{\ee}{\mathrm{e}}
\DeclareMathOperator{\airy}{Ai}
    \crefname{figure}{Figure}{Figures}
    \Crefname{figure}{Figure}{Figures}
    \crefname{table}{Table}{Tables}
    \crefname{section}{\S}{\S}
    \Crefname{section}{\S}{\S}
    \crefname{equation}{}{}
    \Crefname{equation}{}{}
    \crefname{remark}{Remark}{Remarks}
    \crefname{algorithm}{Algorithm}{Algorithms}
    \crefname{appendix}{Appendix}{Appendices}
\title{Arbitrary-order asymptotic expansions of {G}aussian quadrature rules with classical and generalised weight functions}
\author{
Peter Opsomer\footnote{peter.opsomer@gmail.com (corresponding author)}\\ 
Department of Computer Science\\
KU Leuven, Belgium
\and
Daan Huybrechs\footnote{daan.huybrechs@kuleuven.be}\\
Department of Computer Science\\
KU Leuven, Belgium
}
\begin{document}

\maketitle

\begin{abstract}
Gaussian quadrature rules are a classical tool for the numerical approximation of integrals with smooth integrands and positive weight functions. We derive and expicitly list asymptotic expressions for the points and weights of Gaussian quadrature rules for three general classes of positive weight functions: analytic functions on a bounded interval with algebraic singularities at the endpoints, analytic weight functions on the halfline with exponential decay at infinity and an algebraic singularity at the finite endpoint, and analytic functions on the real line with exponential decay in both directions at infinity. The results include the Gaussian rules of classical orthogonal polynomials (Legendre, Jacobi, Laguerre and Hermite) as special cases. We present experiments indicating the range of the number of points at which these expressions achieve high precision. We provide an algorithm that can compute arbitrarily many terms in these expansions for the classical cases, and many though not all terms for the generalized cases.
\end{abstract}

\section{Introduction} \label{SquadrPref}

The topic of this paper is a methodology to obtain high-order asymptotic expansions of nodes and weights of Gaussian quadrature rules. As a significant example, we fully derive results for generalizations of the classical Jacobi and Laguerre weights. The resulting explicit expressions for the classical cases are listed in Appendix~\ref{app:explicit}, with more terms then appeared before in literature. The proposed methodology is based on the asymptotic analysis of the corresponding orthogonal polynomials for large degree. It results in explicit formulae for the nodes and weights which can simply be evaluated one by one, leading to an efficient ${\mathcal O}(n)$ computational scheme for the construction of Gaussian quadrature. In this paper we focus mainly on the essential principles. Detailed computations, which are quite technical in nature, can be found in the phd thesis of the first author~\cite{opsomer2018phd}.

Different expansions are presented for three distinct regions of the integration domain in which the asymptotic behaviour is different: near the endpoints (called a \emph{hard edge} in the literature on asymptotic analysis), near infinity (\emph{soft edge}) and in the interior of the domain (the \emph{bulk}). An algorithm is presented to obtain an arbitrary number of terms. It has been implemented in symbolic software, in which parameters may be present, and in numerical software, for specific numeric choices of those parameters and without a further need for symbolic computations. Our implementation is accompanied by heuristical approaches to choose the appropriate asymptotic expansion and number of terms.

Historically, the main algorithm for the computation of Gaussian quadrature rules has been based on the computation of the eigenvalues of a tridiagonal matrix (the Jacobi matrix), which involves the recurrence coefficients of the three-term recurrence relation of the orthogonal polynomials~\cite{GautschiOPQ}. The computation can be performed in ${\mathcal O}(n^2)$ operations using the Golub--Welsch algorithm \cite{GolubWelsch}. Since then, algorithms with lower computational complexity for large $n$ have started to appear for a number of cases, starting with the Glaser--Liu--Rokhlin algorithm \cite{glaser} and including at least \cite{BogaertIterationFree,bogaert2012legendre,bremer,GilSeguraQuadr,GST,HaleTownsend,OlverTrogdon_RH3}. A brief history of fast algorithms for the special case of Gauss--Legendre quadrature rules is described in \cite{SiamNewsRace}.

\subsection{Asymptotic expansions of orthogonal polynomials and numerical aspects}
Asymptotic analysis of orthogonal polynomials has witnessed significant progress in the last two decades. Large-degree asymptotic expansions can be obtained from integral representations, differential equations satisfied by the polynomials (see \cite{gil2021polynomial} and references therein), or the associated Riemann--Hilbert problem (see in particular~\cite{KMcLVAV,Vanlessen} for Jacobi and Laguerre polynomials).

From a computational point of view, asymptotic expansions of orthogonal polynomials present two important advantages: they become increasingly accurate as the degree $n$ increases, and their evaluation time is independent of $n$.\footnote{Strictly speaking, for this statement we assume that all functions involved can be evaluated in a time independent of $n$, which in some cases may require lookup tables \cite{johIEEE}.} In this sense, they compare favourably to other methods for computing orthogonal polynomials. On the other hand, the terms in the expansion become increasingly involved with increasing order. Moreover, asymptotic expansions in general do not converge for a fixed value of $n$ by adding terms. A typical recommendation is to truncate the expansion after the smallest term, after which the accuracy often deteriorates \cite[p. 182-187]{DavisRabinowitz}. Still, arbitrary accuracy can not be achieved and the availability of error bounds is, unfortunately, lagging behind the construction of the expansions themselves.

Gaussian quadrature rules can be computed from these expansions most simply by numerical rootfinding \cite{bogaert2012legendre,HaleTownsend}. The classical quadrature rules with $n$ points may be computed in ${\mathcal O}(n)$ cost. However, asymptotic expansions can also be found directly for the nodes and weights themselves. For the classical Gauss rules, this was achieved in \cite{BogaertIterationFree,GilSeguraQuadr,GST} and the thesis \cite{opsomer2018phd}. These expansions have ${\mathcal O}(n)$ cost as well, with a constant that is often (but not always) smaller than that of the rootfinding class of methods. Care has to be taken in the efficient evaluation of certain special functions and in various kinds of precomputations. Therefore, whether or not direct asymptotic expansions are faster is not a priori guaranteed and still depends on several implementation aspects, but they offer an appealing simplicity.

Other methods with linear complexity have been described, based on the numerical solution of Riemann--Hilbert problems \cite{OlverTrogdon_RH3,bookRH} and on efficient rootfinding methods for non-oscillatory differential equations \cite{bremer,bremerYang}. Particularly interesting is that these often can achieve arbitrary accuracy, see for example \cite{gil2019fastreliable} for an efficient method to compute Gauss--Laguerre and Gauss--Hermite rules to very high accuracy.

\subsection{Scope of the paper and main results} 
We develop a general approach for direct asymptotic expansions of Gaussian quadrature rules based on the Riemann--Hilbert (RH) problem of the associated orthogonal polynomials. Riemann--Hilbert problems provide a uniform and general framework in which to analyze orthogonal polynomials, not restricted to the classical cases. We have previously described the numerical computation of arbitrary order asymptotic expansions for orthogonal polynomials for Jacobi--type weights \cite{jacobi} and Laguerre-type weights \cite{laguerre}. This was based on the asymptotic analysis of the associated RH problems by Kuijlaars, McLaughlin, Van Assche and Vanlessen in \cite{KMcLVAV} for the Jacobi case and by Vanlessen in \cite{Vanlessen} for the Laguerre case. It resulted in high-order expansions in four different regions of the complex plane in which the polynomials exhibit different asymptotic behaviour. These expansions form the starting points for the expansions of the current paper of the associated Gaussian quadrature rules.

The orthonormal polynomials associated with Gaussian quadrature satisfy the orthogonality conditions
\begin{equation}
	\int_a^b p_n(x) p_k(x) w(x)dx = \begin{cases} 1, & k=n, \\ 0, & k \neq n. \end{cases}
\end{equation} 
The nodes $x_k$ are the zeros of $p_n$. The weights, sometimes referred to as Christoffel numbers, can be expressed in various ways (see \cite[p. 35]{DavisRabinowitz}, \cite[p. 323]{Hildebrand}, \cite[\S 1]{GolubWelsch}). One useful expression is
\begin{equation} 
	w_k = \frac{-\gamma_{n+1}}{\gamma_n p_n^\prime(x_k) p_{n+1}(x_k) } = \frac{\gamma_n}{\gamma_{n-1} p'_n(x_k) p_{n-1}(x_k)} > 0. \label{Eweights} 
\end{equation}
Here, $\gamma_n$ is the leading order coefficient of the orthonormal polynomial $p_n$.

We consider integrals with weight functions for three canonical types of quadrature rules:
\begin{itemize}
\item The first are \textbf{modified Jacobi--type weight functions} of the form
\begin{equation}
\label{Ejacobi_weight}
 w(x) = (1-x)^\alpha (1+x)^\beta h(x), \qquad x \in [-1,1]
\end{equation}
where $h$ is a strictly positive analytic function that represents the modification and $[a,b] = [-1,1]$.

\item The second are \textbf{modified Laguerre-type weight functions} of the form
\begin{equation}
\label{Elaguerre_weight}
 w(x) = x^{\alpha} \ee^{-Q(x)}, \qquad x \in [0,\infty),
\end{equation}
where $[a,b)=[0,\infty)$ and the standard Laguerre case corresponds to $Q(x)=x$. Here, unlike in the Jacobi case, more general functions $Q(x)$ are restricted as detailed further on. Our results are most explicit for monomials $Q(x)=x^m$.

\item Finally, we consider \textbf{modified Hermite--type  weight functions}
\begin{equation}
\label{Ehermite_weight}
 w(x) = \ee^{-Q(x^2)}, \qquad  x \in (-\infty, \infty),
\end{equation}
where $(a,b) = (-\infty,\infty)$ and where $Q(x)=x$ corresponds to the classical Gauss--Hermite case. The functions $Q(x)$ we consider are those of the Laguerre case.
\end{itemize}
The Hermite--type rules in the form above can be obtained from the Laguerre--type rules by a change of variables, hence we focus mostly on the first two cases. The substitution entails the specific values $\alpha=\pm 1/2$ which results in a simplification of the Laguerre expansions.

Numerical experiments indicate that the errors of the expansions decrease at the expected rate for increasing $n$, although no error bounds are provided in this paper. The expansions in the current paper hold in principle for any fixed value of $\alpha$ and $\beta$. However, when those parameters are large, accuracy of the expansions may decrease and any beneficial effect may require very high $n$. This is a known problem, and in that setting we recommend exploring different expansions, for example based on asymptotics with a varying weight (varying with $n$) as studied in \cite{varying,KMF_Jacobi,largeNeg,asyZerNegLag,varCompl}.

We have implemented an algorithm to compute asymptotic expansions for nodes and weights with arbitrary many terms in the symbolic software package {\sc Sage}. By making all steps of the algorithm explicit, i.e. not requiring symbolic treatment, the algorithm can also be implemented in a more efficient non-symbolic language. The resulting expressions themselves have to be computed only once, and can be implemented in any language. The implementation that we make available includes versions in {\sc Fortran}, {\sc Matlab} and {\sc Julia}.\footnote{The code \cite{codeQuadr} is available on our departmental homepage \url{https://nines.cs.kuleuven.be/software/} and in the GitHub repository \url{https://github.com/daanhb/PolynomialAsymptotics.jl}. Several results of this paper have also been incorporated in the Julia package \texttt{FastGaussQuadrature.jl} maintained by Alex Townsend and in Chebfun~\cite{chebfun}.} Some limitations are present in the case of modified Laguerre, as the expansions involve several integrals that have to be evaluated numerically for specific numerical choices of the parameters. This renders a generic listing of expansions rather difficult.

\subsection{Overview of the paper}

In order to understand the structure of the expansions, as well as their derivation, we recall the Riemann--Hilbert background of the orthogonal polynomials in \cref{Sprelim}.
We outline the procedure to turn expansions of polynomials into expansions of the nodes and weights in \cref{Sstrat}. We describe the computation of special functions appearing in our expansions and necessary precomputations in \cref{SquadrComputSpec}. The explicit expressions themselves are listed in \cref{SLaguerre}, \cref{SJacobi} and \cref{SquadrGH}, with the classical cases listed in Appendix~\ref{app:explicit}. 
We illustrate the statements with numerical experiments throughout the paper.

\section{Preliminaries} \label{Sprelim}

We briefly recall the background of the asymptotic expansions for Jacobi--type and Laguerre--type orthogonal polynomials as derived in \cite{jacobi,laguerre}, see also \cite{opsomer2018phd}. In particular, we define the quantities that arise from the modified weight functions \eqref{Ejacobi_weight}--\eqref{Elaguerre_weight} and that appear in the final expansions: the constants $c_k$ and $d_k$, the phase functions $\lambda_n(z)$ and $\zeta_n(z)$ and, in case of Laguerre--type weights, the constant $\beta_n$. These quantities depend on the analytic modifier $h(x)$ in \eqref{Ejacobi_weight} or on the function $Q(x)$ in \eqref{Elaguerre_weight} and \eqref{Ehermite_weight}.

\subsection{Riemann-Hilbert analysis} \label{SRHA}

The explicit expansions in \cite{jacobi,laguerre} are derived from the Deift-Zhou nonlinear steepest descent analysis \cite{DeiftZhou} of a Riemann--Hilbert problem associated with orthogonal polynomials \cite{fokas1992isomonodromy,deift2000rh}. The asymptotic analysis for Jacobi-type weight functions of the form \cref{Ejacobi_weight} was carried out in \cite{KMcLVAV}, while the description for Laguerre-type weight functions \cref{Elaguerre_weight} is in \cite{Vanlessen}.

The Riemann-Hilbert problem has a $2\times 2$ complex matrix-valued solution. The main result in \cite{jacobi,laguerre} has been the explicit asymptotic approximations to a matrix-valued function $R(z)$, defined in the complex plane. For large $n$, this matrix is the identity matrix plus an asymptotic series,
\begin{equation}
 R(z) \sim I + \sum_{k=1}^{\infty} \frac{R_k(z)}{n^{k}}, \qquad n \rightarrow \infty. \label{asympRn}
\end{equation}
The expansions of the polynomials are formulated in terms of $R(z)$ and, hence, additional terms $R_k(z)$ give rise to additional terms in the expansions of the polynomials. This happens via an explicit and invertible sequence of transformations.

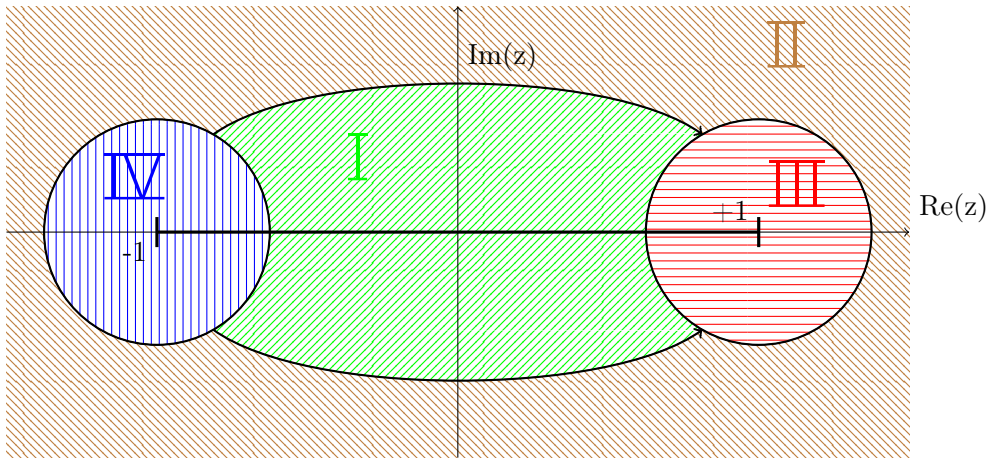
\begin{figure}[t]
\begin{center}
\begin{tikzpicture}
        \fill[pattern=north west lines, pattern color =brown] (0,4) rectangle (12, -2);
        \fill[white] (2.75,2.32) rectangle (9.25, -0.32);
        \draw[->] (2,1) [fill=white] circle [radius=1.5];
        \draw[->] (10,1) [fill=white] circle [radius=1.5];
        \draw[->,fill=white] (2.75, 2.3) .. controls(4,3.2) and (8, 3.2) .. (9.25, 2.3);
        \draw[->,fill=white] (2.75, -0.3) .. controls(4,-1.2) and (8, -1.2) .. (9.25, -0.3);

        \fill[pattern=north east lines, pattern color =green] (2.75,2.3) rectangle (9.25, -0.3);
        \draw[->] (2,1) [fill=white] circle [radius=1.5];
        \draw[->] (10,1) [fill=white] circle [radius=1.5];
        \draw[->] (10,1) [thick,pattern=horizontal lines, pattern color =red] circle [radius=1.5];
        \draw[->] (2,1) [thick,pattern=vertical lines, pattern color =blue] circle [radius=1.5];
        \draw[->,thick,pattern=north east lines, pattern color =green] (2.75, 2.3) .. controls(4,3.2) and (8, 3.2) .. (9.25, 2.3);
        \draw[->,thick,pattern=north east lines, pattern color =green] (2.75, -0.3) .. controls(4,-1.2) and (8, -1.2) .. (9.25, -0.3);

        \draw [very thick] (2,1)--(10,1);
        \draw[->] (6,-2)--(6,4);
        \draw (6,3) node[above right] {Im(z)};
        \draw[->] (0,1)--(12,1);
        \draw (12,1) node[above right] {Re(z)};
        \draw (2,1) node[below left] {-1};
        \draw (10,1) node[above left] {+1};
        \draw[very thick] (2,0.8)--(2,1.2);
        \draw[very thick] (10,0.8)--(10,1.2);
        \draw[green] (4.8,2) node {\Huge{\Romanbar{1} } };
        \draw[brown] (10.5,3.5) node {\Huge{\Romanbar{2} } };
        \draw (10,1.2) node[red,above right] {\Huge{\Romanbar{3} } };
        \draw (2.5,1.3) node[above left,blue] {\Huge{\Romanbar{4} } };
\end{tikzpicture}
\end{center}
\caption{Regions of the complex plane in which the Jacobi-type polynomials have different asymptotic expansions: the lens (\Ri), the outer region (\Ro) and the right and left disks (\Rright~and \Rleft).}
\label{Fregions}
\end{figure}

The asymptotic analysis is different in different regions of the complex plane. The regions are illustrated for the case of Jacobi-type polynomials in \cref{Fregions}. There are four types of asymptotic expansions of the orthogonal polynomials $p_n(z)$: (\Ri) inner asymptotics for $z$ in the so-called `lens' around the interval, (\Ro) outer asymptotics valid for $z$ away from the lens and the disks around the endpoints, (\Rright) boundary asymptotics valid for $z$ near the rightmost endpoint $z=+1$ and (\Rleft) boundary asymptotics near the left endpoint $z=-1$. The matrix-valued function $R(z)$ has different expressions in these regions, which we label $R^{\O}(z)$ away from the disks, and $R^{\R}(z)$ and $R^{\L}(z)$ in the right and left disks respectively. Only the expansions in regions $(\Ri)$, $(\Rright)$ and $(\Rleft)$ contain zeros of the polynomials and they give rise to expansions for the bulk of the zeros in $(-1,1)$, and for the extreme zeros near $\pm 1$.

The figure with four regions is qualitatively similar for Laguerre-type and Hermite-type polynomials, albeit with different endpoint regions. We elaborate on the differences further on.

Mathematically, the regions shown in \cref{Fregions} are of arbitrary size. Depending on how they are chosen, any given point $z\in\mathbb{C}$ can in principle belong to several regions. As a result, the expansions for the different regions have overlapping regions in which they are valid. In terms of implementation, the choice of the expansion is relevant as the accuracy can differ. We formulate heuristic choices of the expansions in later sections of this paper. We experimentally observed that, when two expansions apply at the same point, the differences in the corresponding relative errors for large $n$ are fairly modest, in most cases not exceeding a factor of $2$. The exception is when one expansion approaches the boundaries of its domain of validity: for example, the expansion for the outer region is not accurate near the interval. Within a radius of about $0.2$ around an endpoint, the asymptotic expansion for that disk is orders of magnitude more accurate than the three other ones.

\subsection{Jacobi-type polynomials}

The asymptotic expansion for the orthonormal Jacobi-type polynomials valid in region (\Ri) in the complex plane, as formulated in \cite{jacobi}, is\footnote{We note that the formulation in \cite{jacobi} is for the monic orthogonal polynomials, which differs from the formulation here in the leading order coefficient $\gamma_n$. The latter can be expanded asymptotically too, see \cite[(26)]{jacobi}.}
\begin{equation}
	p_n(z) = \frac{2^{\frac12 - n} \, \gamma_n}{\sqrt{w(z)} \, (1-z^2)^{1/4} } \begin{pmatrix} 1 \\ 0 \end{pmatrix}^T R^{\O}(z) \begin{pmatrix} D_\infty & 0 \\ 0 & -\frac{i}{D_\infty} \end{pmatrix} 
	\begin{pmatrix} \cos \left( \lambda_n(z) + \frac12  \arccos z \right) \\ \cos\left( \lambda_{n}(z) - \frac 12 \arccos z \right) \end{pmatrix}. \label{EpiInt}
\end{equation}
Here, $\gamma_n$ is the leading order coefficient (with $z^n$) of the polynomial and $D_\infty$ is a constant,
\[
 D_{\infty} =  2^{-\frac{\alpha+\beta}{2}}\exp\left( \frac{1}{4\pi i}\oint \frac{\log h(\zeta)}{(\zeta^2-1)^{1/2}} d\zeta \right),
\]
where the contour encircles the interval $[-1,1]$ and $h(z)$ is the additional function that appears in the Jacobi-type weight function \cref{Ejacobi_weight}. The constant $D_\infty$ is the limit for $z \to \infty$ of the Sz{\"e}go function $D(z)$ of the orthogonal polynomial. Note that for the classical Jacobi case, where $h(x) = 1$, we have $D_\infty=2^{-\frac{\alpha+\beta}{2}}$. The leading order term of \cref{EpiInt} corresponds to substituting $R^{\O}(z)$ by $I$, the $2 \times 2$ identity matrix.

The trigonometric behaviour of the polynomials in this region is determined by the phase function
\begin{equation}
 \lambda_n(z)  = \left(n + \frac{\alpha + \beta}{2}\right)\arccos z   -\frac{\pi}{4} -\frac{\alpha\pi}{2} + \frac{(1-z^2)^{\frac12}}{4 \pi i}  \oint \frac{\log h(\zeta)}{(\zeta^2-1)^{1/2}}\frac{d\zeta}{\zeta-z}. \label{Elambda}
\end{equation}
The phase depends on the $h(x)$ function via a contour integral as well, which in this case should encircle the point $z$. This contour integral is a function of $z$ and can be expanded around $z= \pm 1$, for example using the power series $\sum_{n=0}^{\infty} c_n (z-1)^n$ and $\sum_{n=0}^{\infty} d_n (z+1)^n$. These series have coefficients given by
\begin{align}
	c_k & = \frac{1}{2\pi i} \oint_{\gamma} \frac{\log h(\zeta)}{(\zeta^2-1)^{1/2}}\frac{\dint \zeta}{(\zeta-1)^{k+1}}, \label{Ecn} \\
	d_k & = \frac{1}{2\pi i} \oint_{\gamma} \frac{\log h(\zeta)}{(\zeta^2-1)^{1/2}}\frac{\dint \zeta}{(\zeta+1)^{k+1}}, \label{Edn}
\end{align}
for $k \geq 0$. The coefficients enter the asymptotic expansions of the polynomials, and of the points and weights in this paper. We refer to \cite{jacobi} for comments on their computation.

Around the endpoints the polynomials exhibit behaviour that relates to the Bessel function. Each endpoint $\pm 1$ is independent of $n$ and for this reason is sometimes referred to as a `hard edge'. In a disk around the right endpoint $z=1$ the expression given in \cite{jacobi} is
\begin{align}
	p_n(z) &= \gamma_n \frac{ 2^{-n} \sqrt{n \pi \arccos z} }{\sqrt{w(z)} \, (1-z^2)^{1/4}  } \begin{pmatrix} 1 \\ 0 \end{pmatrix}^TR^{\R}(z) \begin{pmatrix} D_\infty & 0 \\ 0 & -\frac{i}{D_\infty} \end{pmatrix}  \times \label{Epiboun}  \\
	& \begin{pmatrix} \cos\left(\zeta(z) + \frac12 \arccos z\right) J_\alpha(n \arccos z)  + \sin\left(\zeta(z) + \frac12 \arccos z \right) J_\alpha'(n \arccos z) \\
	     \cos\left(\zeta(z) - \frac12 \arccos z \right) J_\alpha(n \arccos z) + \sin\left(\zeta(z) - \frac12 \arccos z \right) J_\alpha'(n \arccos z) \end{pmatrix}. \nonumber
\end{align}
This expression may seem to be undefined near $z=1$; however, the singularities cancel analytically with other terms. In our implementation for the Jacobi case \cite{ninesJac}, we have also included series expansions around the endpoints for that reason. Here, the phase function $\zeta(z)$ is given by
\begin{equation}
 \zeta(z)  =\frac{\alpha+\beta}{2}\arccos z +\frac{(1-z^2)^{1/2}}{4\pi i}\oint_{\gamma} \frac{\log h(\zeta)}{(\zeta^2-1)^{1/2}}\frac{d\zeta}{\zeta-z},  \label{Ezeta}
\end{equation}
where the contour integral is similar as before. The expansion for the left disk is entirely similar due to the symmetry relation $P_n^{(\alpha,\beta)}(-x) = (-1)^n P_n^{(\beta,\alpha)}(x)$ \cite[Tab. 18.6.1]{DLMF}. The endpoint expansions remain structurally similar even when $h(x)$ is not symmetric, but in that more general case the symmetry involves substituting $h(x)$ for $h(-x)$.

Both expansions have the form of a prefactor consisting of a normalization factor and an envelope function, multiplied by an oscillatory part consisting of $R(z)$ and oscillatory factors. Note that the function $h$ appears in the phase of the oscillations, via a contour integral; in $R(z)$, via $c_k$ and $d_k$; and in the prefactor, via the weight function $w(z)$. The prefactor does not affect the points $x_k$, but it does affect the weights.

\subsection{Laguerre-type polynomials}

The structure of the expansions for Laguerre-type polynomials is very similar to that of Jacobi-type polynomials. We do not repeat the expressions here, but refer to \cite{laguerre,opsomer2018phd}. We do recall the context and the expressions that are necessary for understanding and implementing the expansions of this paper.

Recall the generalised Laguerre weight function \cref{Elaguerre_weight}. We consider three cases for the function $Q(x)$:
\begin{itemize}
 \item a monomial: $Q(x) = q_m x^m+q_0$,
 \item a general polynomial: $Q(x) = \sum_{k=0}^m q_k x^k$,
 \item a general function $Q(x)$ analytic on the integration interval.
\end{itemize}
The function $Q(x)$ should be such that all moments $\int_0^\infty x^\alpha x^k e^{-Q(x)} {\rm d}x$ are finite, which implies at least that
\[
\lim_{x \to \infty} Q(x)  = +\infty.
\]
For polynomial $Q$, it is sufficient that the leading order coefficient $q_m$ is positive. Here, as we shall see the expansions depend on certain contour integrals involving $Q$ which have to be evaluated by other means, and we present only limited results.

The left endpoint $x=0$ of the Laguerre-type polynomials is very similar to the left endpoint of Jacobi-type polynomials: it is a hard edge that gives rise to expansions in terms of the Bessel function and its derivative.

The behaviour is quite different for $x \to \infty$, a so-called \emph{soft edge}. The zeros of the Laguerre-type polynomials accumulate on a finite interval $(0,1)$ after an $n$-dependent rescaling of the variable
\[
 x = \beta_n z,
\]
where $\beta_n$ is the \emph{Mhaskar-Rakhmanov-Saff (MRS) number} (see \cite[\S3]{laguerre}). The local asymptotic behaviour near the soft edge is given in terms of the Airy function and its derivative. This leads to higher order poles in the derivations and thus also to longer formulae for the $R_k(z)$.

For classical Laguerre polynomials, the MRS number is $\beta_n = 4n$. Indeed, the largest zero of the Laguerre polynomial of degree $n$ scales approximately as $4n$. For general monomial functions $Q(x)=q_m x^m$, we have
\[
 \beta_n = n^{1/m} \left( \frac{m \, q_m \, A_m}{2}\right)^{-\frac{1}{m}},
\]
with
\begin{equation} \label{EAm}
A_k = 4^{-k}
\begin{pmatrix} 2k \\ k \end{pmatrix}.
\end{equation}
In the more general polynomial case, $\beta_n$ itself has an expansion in powers of $n^{-\frac{1}{m}}$. For general $Q$, the MRS number is such that
\begin{equation}
 \label{Emrs}
 2\pi n = \int_0^{\beta_n} Q'(x) \sqrt{ \frac{x}{\beta_n - x} } \, {\rm d} x.
\end{equation}
A full expansion of $\beta_n$ can not be derived at this level of generality. However, equation \cref{Emrs} can be solved numerically, such that for each $n$ a value of $\beta_n$ can be computed. These values appear in the expansions. We refer to \cite[\S3]{laguerre} for more considerations on the computation and expansion of the MRS numbers.

In the asymptotic expansions of Laguerre-type orthogonal polynomials, the generalisation of the weight function $Q(x)$ appears in the argument of the Airy and Bessel functions, but not in the trigonometric functions that are multiplied by them, while it was the other way around in \cref{Epiboun} for the Jacobi case. This is due to choices made in the derivation of the Riemann-Hilbert analysis.

Finally, the modification $Q(x)$ results in $c_k$ and $d_k$ coefficients that are analogous to \cref{Ecn}. For the Laguerre-type case, they are
\begin{align}
	c_k & = \frac{\beta_n}{2\pi i n} \oint \frac{\sqrt{y} Q'(\beta_n y)}{\sqrt{y-1}y^{k+1}} \, {\rm d}y, \label{Ecn2} \\
	d_k & = \frac{\beta_n}{2\pi i n} \oint \frac{\sqrt{y} Q'(\beta_n y)}{\sqrt{y-1}(1-y)^{k+1}} \, {\rm d}y, \label{Edn2}
\end{align}
where the contours should enclose the interval $[0,1]$. These coefficients, too, appear in the asymptotic expansions for the nodes and weights.

\section{Methodology for the asymptotic expansions of $x_k$ and $w_k$} \label{Sstrat}

The process of inverting an expansion in order to produce expansions for its zeros is, in principle, well known. However, in practice, the computations involved are lengthy and laborious. Here, rather than detailing all computations, we illustrate the general principles. This allows to describe the structure of the expansions themselves, as well as the scope and limitations of our implementation.

We proceed formally. Consider an asymptotic expansions of the form
\begin{equation}
\label{Egeneral_expansion}
 u(n,x) \sim \sum_{k=0}^\infty u_k(x; n) \, n^{-\mu_k},
\end{equation}
where $\{\mu_k\}_{k=0}^\infty$ a strictly increasing sequence. Though the functions $u_k$ may depend on $n$, it is assumed they do not grow with $n$, such that the truncation of the right hand side in \cref{Egeneral_expansion} after $T$ terms has an error that decays like ${\mathcal O}(n^{-\mu_{T+1}})$.

\subsection{Leading order term}

To leading order, the roots of $u(n,x)$ are determined by the roots of the first term $u_0(x; n)$ in \cref{Egeneral_expansion}. This implies that we need to find all solutions to the equation
\begin{equation}
\label{Eleading_order}
 u_0(x; n) = 0.
\end{equation} 
We enumerate the roots as $t_k$ with $k=1,2,\ldots$, such that $t_k < t_{k+1}$.

The nature of this equation, and the difficulty of finding its solution, depends on the asymptotic expansion and on the special functions it contains. The leading order terms of the expansions in this paper include trigonometric functions (for the bulk regions), Bessel functions (for a hard edge) or Airy functions (for a soft edge). That is the reason why the roots of the Bessel function, for example, appear in the expansions for the nodes $x_k$ near the endpoints in the Jacobi-type case.

The more general modified weight functions also lead to more complicated leading order terms. This is a factor that significantly contributes to the difficulty of providing explicit asymptotic expansions for these more general cases. Explicit expressions appearing further on in this paper that specialize \cref{Eleading_order} include \cref{EtranscStdLag} for the bulk region in the case of Laguerre polynomials, and \cref{EtkBulkJacClassical} and \cref{EtkBulkJacModif} for the bulk region for Jacobi polynomials with and without the additional function $h(x)$ in the weight.

\subsection{Subsequent terms}

We formally consider a full expansion of the zeros $x_k$ of the form
\begin{equation}
\label{Eformal_root}
 x_k \sim \sum_{l=0}^\infty a_{k,l} \, n^{-\nu_l},
\end{equation}
where $\{\nu_l\}_{l=0}^\infty$ is a (suitably chosen) strictly increasing sequence.  One subsitutes \cref{Eformal_root} into \cref{Egeneral_expansion} and then proceeds by expanding all functions $u_k$ around $x_k$. This results again in an asymptotic expansion in inverse powers of $n$. Next, the terms of this expansion are equated to zero one by one.

Since the functions $u_k$ are known, the unknowns are the $a_{k,l}$ coefficients. These can be computed one by one, starting with $l=0$. Once the value of $a_{k,l}$ has been found, it can be substituted back into the expansion. Equating the expression multiplying a higher power of $n$ to zero yields a linear equation for the next coefficient $a_{k,l+1}$. In contrast to the first term to find $t_k$, for which \cref{Eleading_order} has to be solved, the computation of subsequent terms is a linear problem.

The above represents the general methodology. 
For Jacobi, in case of the lens, we use the notation
\[
 x_k = t_k + \sum_{l=1}^\infty z_{1,l+1} n^{-l},
\]
and we solve recursively for the values of $z_{1,l+1}$. The expression is similar for Laguerre in the lens,
\[
 \frac{x_k}{\beta_n} = t_k + \sum_{l=1}^\infty z_{1,l+1} n^{-l},
\]
where $\beta_n$ is the MRS number. In case of the left boundary, we use
\[
 x_k = -1 + \frac{t_k}{n^2} + \sum_{l=1}^\infty z_{1,l+1} n^{-2-l}
\]
for Jacobi and
\[
 \frac{x_k}{\beta_n} = \frac{t_k}{n^2} + \sum_{l=1}^\infty z_{1,l+1} n^{-2-l}
\]
for Laguerre, where again $\beta_n$ is the MRS number.

\subsection{The quadrature weights}

The quadrature nodes are the roots of the orthogonal polynomials and are estimated asymptotically using the procedure described above. For the quadrature weights, we recall formula \eqref{Eweights}.
There are several alternative expressions for the weights, yet in the end they should all lead to the same expansion. One advantage of formula \eqref{Eweights} is that it is generally applicable, whereas some other known expressions may be shorter but specific to the classical polynomials.

The simplest approach is to explicitly evaluate the right hand side of \cref{Eweights} using the asymptotic expansion of the orthogonal polynomials. The derivative $p_n'$ can be evaluated by differentiating an expansion for $p_n$ term by term.
The shifted polynomial $p_{n-1}$ has an expansion in powers of $(n-1)$ rather than $n$, but it can also readily be evaluated. Still, we have proceeded to derive explicit expansions for the weights directly.

Thus, we substitute the previously computed expansion \cref{Eformal_root} of the node into \cref{Eweights} and re-expand the entire formula. We take into account the following considerations:
\begin{itemize}
 \item We obtain an asymptotic expansion of $p_{n-1}$ by re-expanding $(n-1)^{-1}$ in inverse powers of $n$.
 \item An expansion for $p_n'$ is obtained by differentiating $p_n$ term by term. However, here, some efficiency can be gained. The expansions typically have the form $p_n(x) \sim E_n(x) \varepsilon(x)$, where $E_n(x)$ is a pre-factor without roots and $\varepsilon(x)$ is the term that vanishes at the roots of $p_n$. Using the fact that $x_k$ is a root of $p_n$, we find that
 \[
  p_n'(x_k) = E_n'(x_k) \varepsilon(x_k) + E_n(x_k) \varepsilon'(x_k) = E_n(x_k) \varepsilon'(x).
 \]
 Thus, we can avoid having to compute the (expansions of the) derivatives of the prefactor $E_n$.
 \item The weight function evaluated at the node, $w(x_k)$, is a common factor in all terms and is factored out once for the final expression.
\end{itemize}

\subsection{Explicit expansions and explicit convolutions}
\label{ss:explicit}

The steps that lead to the expansions of the nodes and weights involve a great number of expansions and re-expansions. Many of these take the form of convolutions: this occurs each time an expansion is applied to a variable that is itself an expansion, or whenever two expansions are multiplied.

In principle, this process is greatly simplified with the aid of computer algebra software. However, any use of such software prohibits an implementation of the derivations of the expansions in a non-symbolic language, though of course the final result can be copied and implemented in any language. Still, even then, the length of the expressions leads to a substantial practical limitation on the number of terms that can be computed.

An alternative is to restrict ourselves to cases where computations can be done analytically. If succesful, this has the advantage of leading to exact expressions in analytic form, for which possibly arbitrary many terms can be generated on the fly. Yet, this strongly restricts the scope of the methodology to cases where analytical derivations are feasible. This rules out any of the modifications of the more general weight functions under consideration.

For these reasons, we have upheld the following two principles in our implementation:
\begin{itemize}
 \item Each expansion of a special function (such as $\sqrt{1+z}$ around $z=0$ or $\frac{1}{n+1}$ in terms of $n^{-k}$) is derived analytically and the resulting formulas are implemented manually. (Hence, we do not rely on the capability of the software to produce series expansions.)
 \item Each convolution is programmed explicitly. Here, too, we do not rely on the algebraic software to recombine, e.g., products of power series into a single power series.
\end{itemize}
These principles make the implementation much more efficient, compared to relying on the symbolic manipulations of a software package.

Having said that, we do compute expansions that contain parameters such as, e.g., $\alpha$ and $\beta$ for the Jacobi-case, $\alpha$ for the Laguerre case, and the $c_n$ and $d_n$ coefficients given by \cref{Ecn} in the case of modified weight functions. If these parameters do not have numeric values, we do currently use the capabilities of a symbolic software package ({\sc Sage}) to manipulate the resulting expressions. When translated into an imperative programming language, all the parameters should have numerical values.

\section{The computation of special functions} \label{SquadrComputSpec}

Some special functions appear in the formulations of the expansions of the polynomials, and the zeros of these functions appear in the expansions for the nodes and weights. Care has to be taken to make sure that these functions can be evaluated at a cost that does not depend on $n$.

\subsection{Zeros of the Bessel and Airy functions}

For the zeros $j_{\alpha,k}$ of the Bessel function $J_\alpha$ of order $\alpha$, we adapt the procedure from \cite[besselroots.m]{chebfun}. All $j_{\alpha,k}$ are initialised with the McMahon expansion \cite[(8)]{McMahon}
\begin{equation}
	j_{\alpha,k} \sim \frac{\pi}{4} (4k + 2\alpha -1) - \frac{4\alpha^2-1}{2\pi(4k + 2\alpha -1)} - \frac{4(4\alpha^2-1)(28\alpha^2 -31)}{24\pi^3(4k + 2\alpha -1)^3} + ..., \quad k \rightarrow \infty. \label{EmcMahon}
\end{equation}
Terms up to $\mathcal{O}(k^{-13})$ are added in a Horner scheme. For $\alpha =0$, the first twenty $j_{0,k}$ are replaced by hard-coded exact double precision values. For other $\alpha \in (-1,5]$, the first six zeros are replaced by Piessens' Chebyshev series approximation \cite[(3 \& 4)]{PiessensBes}, 
\begin{equation}
	j_{\alpha,s} = \left[1 + \delta_{s,1}(\sqrt{\alpha+1} -1) \right] \left(\frac{c_0^{(s)}}{2} + \sum_{k=1}^{N_s} c_k^{(s)} T_k\left[\frac{\alpha-2}{3}\right] \right).
\end{equation}
Here, $\delta_{s,1}$ is the Kronecker delta and $T_k(x)$ is the Chebyshev polynomial of the first kind, $T_k(x) = \cos(k\arccos x)$. The Chebyshev coefficients $c_k^{(s)}$ are listed in \cite{PiessensBes} up to $s=6$ and for $N_s$ such that $c_{N_s+1}^{(s)} < 10^{-12}$. As this is a series approximation, it is not used for $\alpha$ outside $(-1,5]$. For $\alpha > 5$, \cref{EmcMahon} is used, which one can expect not to give accurate results for small $k$. As such, the approximation of $j_{\alpha,k}$ is fast but not very accurate for Bessel parameters outside this range. However, they could be refined by using Newton iterations.

The zeros of the Airy function are approximated by \cite[(9.9.6\&18)]{DLMF} 
\begin{align}
  a_m \sim -t^{2/3}\left(1 + \frac{5}{48}t^{-2} - \frac{5}{36}t^{-4} + \frac{77125}{82944}t^{-6} -\frac{10856875}{6967296} t^{-8} + \frac{162375596875}{334430208} t^{-10} \right),
\end{align}
with $t=3\pi(4m-1)/8$. The first ten values are replaced by hard-coded exact double precision values.

\subsection{Evaluation of the Bessel and Airy functions}

The evaluation of the Bessel function itself is computationally expensive. The problem appears when evaluating $J_{\alpha+1}(j_{\alpha,k}) = -J_{\alpha-1}(j_{\alpha,k})$ in the computation of the weights of the quadrature rule, see \cref{SexplStdGL}. In an implementation where the evaluation of special functions is not available in the language, as is the case for our implementation in Fortran, we have used the following approximations.

For $k \leq 5+10\alpha^2/\pi$, we use the expansion for small arguments \cite[(10.2.2)]{DLMF},
\begin{equation}
 J_\alpha(z) \sim \left(\frac{z}{2}\right)^\alpha \sum_{k=0}^\infty (-1)^k \frac{\left(\frac{z^2}{4}\right)^k}{k! \, \Gamma(\alpha+k+1)}, \quad z \rightarrow 0.
\end{equation}
The series is truncated when the next term is smaller than $10^{-12}$ times the current partial sum.
For larger values of $k$, we use \cite[(10.17.3)]{DLMF}
\begin{align}
	J_\alpha(z) & \sim \sqrt{\frac{2}{\pi z} } \left( \cos\left(z -\frac{\alpha\pi}{2} -\frac{\pi}{4} \right) \sum_{k=0}^\infty (-1)^k \frac{a_{2k}(\alpha)}{z^{2k}}  - \sin\left(z -\frac{\alpha\pi}{2} -\frac{\pi}{4} \right) \sum_{k=0}^\infty (-1)^k \frac{a_{2k+1}(\alpha)}{z^{2k+1}} \right), \\
	a_m(\alpha) &= 2^{-3m} (m!)^{-1} \prod_{n=1}^{m}(4 \alpha^2-(2n-1)^2).
\end{align}
Care is taken to avoid overflow in the factorial function above. The summations are truncated once the terms start to diverge, taking into account that the sine or cosine may be zero. Recall from our comment in  \cref{SquadrPref} that, for large $\alpha$ and/or $\beta$, one may want to consider using different expansions altogether, hence we do not optimize for that regime here.

The expressions for small $z$ for the Airy function and its derivative are implemented similarly. For the derivative of the Airy function evaluated at the first ten zeros of the Airy function, we use hard-coded exact double precision values. The approximation for large negative argument that we use for the other derivatives is \cite[(9.7.10)]{DLMF}
\begin{align}
	\airy'(-z) & \sim \frac{z^{1/4}}{\sqrt{\pi}} \left( \sin\left[\frac{2 z^{3/2}}{3} -\frac{\pi}{4} \right] \left\{ \sum_{k=0}^\infty (-1)^k \frac{v_{2k}}{\left(\frac{2 z^{3/2}}{3}\right)^{2k} } \right\} - \cos\left[\frac{2 z^{3/2}}{3} -\frac{\pi}{4} \right] \left\{ \sum_{k=0}^\infty (-1)^k \frac{v_{2k+1}}{\left(\frac{2 z^{3/2}}{3}\right)^{2k+1} } \right\} \right), \\
	v_m & = \frac{(2m+1)(2m+3)(2m+5) \cdots (6m-1)(6m+1)}{(1-6m) 216^m m!} = \frac{(6m+1) \Gamma(m +5/6) \Gamma(m +1/6)}{(1-6m) 2^{m+1} \pi m!}.
\end{align}

\subsection{Transcendental equation for the bulk in the Laguerre case}
\label{subsect_transcendental}

For the classical Laguerre case, or Laguerre-type case with $Q(x)=x$, equation \cref{Eleading_order} for the leading order term of the nodes becomes the transcendental equation
\begin{equation}
 \label{EtranscStdLag}
 2 \arccos(\sqrt{t}) -2 \sqrt{t-t^2} - p = 0
\end{equation}
with
\begin{equation}
\label{Ep}
 p =\frac{4n-4k+3}{4n+2\alpha+2}.
\end{equation}
We use an iterative Newton procedure, with starting values
\[
 t_{k} \approx \frac{\pi^2}{16}(p-1)^2.
\]
This approximation arises from the series expansion of the left hand side of \cref{EtranscStdLag} near $t=0$, which yields
\[
 p\pi = \pi -4\sqrt{t} +\frac{2}{3} t^{3/2} +\mathcal{O}(t^{5/2}).
\]
Heuristically, we have observed that $6$ iterations are sufficient for double precision accuracy for all $k$ in the bulk region.

The exact form of the transcendental equation results from choices in the Riemann-Hilbert analysis, and could appear differently by selecting another function for splitting the lens. 
However, the rootfinding problem is numerically straightforward, as for each value of $k$ there is a unique simple root in $(0,1)$. This is shown in the following lemma.

\begin{lemma}
The left hand side of equation \cref{EtranscStdLag} has a single simple root in the interval $(0,1)$ for each $k$.
\end{lemma}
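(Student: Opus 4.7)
The strategy is to treat the left-hand side of \cref{EtranscStdLag} as a function $f(t) = 2\arccos(\sqrt{t}) - 2\sqrt{t-t^2} - p$ on $[0,1]$ and show it is continuous, strictly monotone, and changes sign, so that a unique simple root exists by the intermediate value theorem together with a non-vanishing derivative.

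First I would evaluate $f$ at the endpoints. At $t=0$ one has $f(0) = 2\arccos(0) - 0 - p = \pi - p$, and at $t=1$ one has $f(1) = 2\arccos(1) - 0 - p = -p$. For the range of $k$ relevant to the bulk region, $p$ as defined in \cref{Ep} satisfies $0 < p < 1 < \pi$, so $f(0) > 0$ and $f(1) < 0$.

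Next I would differentiate. A direct computation gives
\begin{equation*}
 f'(t) = -\frac{1}{\sqrt{t(1-t)}} - \frac{1-2t}{\sqrt{t(1-t)}} = \frac{2t - 2}{\sqrt{t(1-t)}} = -2\sqrt{\frac{1-t}{t}},
\end{equation*}
which is strictly negative on the open interval $(0,1)$. Hence $f$ is continuous on $[0,1]$ and strictly decreasing on $(0,1)$, so there is exactly one $t_k \in (0,1)$ with $f(t_k)=0$. Simplicity follows immediately from $f'(t_k) \neq 0$.

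There is no real obstacle here; the only point that requires a brief comment is the behaviour at the endpoints, where $f'$ blows up but $f$ itself remains continuous, so the intermediate value theorem applies on the closed interval while the strict monotonicity argument uses only the open interval. One may optionally remark that the expansion $f(t) = \pi - p - 4\sqrt{t} + \tfrac{2}{3}t^{3/2} + \mathcal{O}(t^{5/2})$ near $t=0$ justifies the starting value $t_k \approx \tfrac{\pi^2}{16}(p-1)^2$ used by the Newton iteration in the preceding paragraph.
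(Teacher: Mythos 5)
Your proof is correct and takes essentially the same approach as the paper: the identical derivative computation yielding $-2\sqrt{(1-t)/t}<0$, followed by the same endpoint sign check using $0<p<1$ from \cref{Ep}. The only (immaterial) difference is that the paper's own proof treats the subtracted constant as $p\pi$ rather than $p$ --- an internal inconsistency with the displayed form of \cref{EtranscStdLag} --- but since $0<p<1$ the sign change holds in either normalisation.
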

\begin{proof}
We denote \cref{EtranscStdLag} as $F(t)=0$. The derivative is
\[
 F'(t) = -(t-t^2)^{-1/2} -\frac{1-2t}{\sqrt{t-t^2}}  = - 2\sqrt{\frac{1-t}{t}},
\]
which is strictly negative, so $F(t)$ is monotonically decreasing on $(0,1)$. The limiting values of $F$ at the endpoints are the maximum
\[
 F(0) = 2\arccos(0) - 2\sqrt{0} - p\pi = \pi \left(1 - p\right),
\]
with $p$ as in \cref{Ep}, and the minimum 
\[
 F(1) = -p \pi = \frac{4n-4k+3}{4n+2\alpha+2}\pi.
\]
Since $k \in [1,n]$, the numerator $4 n - 4 k+3$ lies in $[3,4n-1]$. Moreover, since $\alpha > -1$ the denominator $4 n +2 \alpha +2$ is larger than $4n$. As a result, the maximum is always strictly positive and the minimum is strictly negative, hence there is a single simple zero for $t \in (0,1)$.
\end{proof}

\section{Gauss--Laguerre rules}\label{SLaguerre} 

\subsection{Standard associated Gauss--Laguerre} \label{SexplStdGL}

The weight function in the standard case of associated Gauss--Laguerre is $w(x) = x^\alpha \ee^{-x}$. We have computed asymptotic expansions in terms of inverse powers of $n$. However, the formulae are shorter in terms of inverse powers of $(4n+2\alpha+2)$ and for that reason they are presented that way in this paper. Explicit expressions are listed in Appendix A.

The higher order terms of the expansions contain large integer coefficients with alternating signs, which may form a possible source of cancellation errors. Still, all terms multiply an inverse power of $n$ where the exponent decreases as those integers increase, and therefore any cancellation error will decay accordingly with increasing $n$.
Note that this problem is not specific to asymptotic expansions for Gauss--Laguerre quadrature, it is a rather general problem with expansions.

\subsection{Heuristical choices} \label{SheurLag}

In existing implementations of our approach in \cite{chebfun,FastGaussQuadr} we switch to asymptotics for $n \geq 128$. Below this threshold, we suggest to use the Golub-Welsch algorithm. If very high accuracy is required, weights with magnitude between $10^{-308}$ and $10^{-16}$ may be recomputed using forward recurrence of the orthogonal polynomials. Large values of $\alpha$ may lead to lower accuracy. In our code, we emit a warning when $\alpha^2/n > 1$ and we assume the asymptotics validly apply otherwise.

The simplest heuristic procedure is to sum terms of the asymptotic expansion as long as they decay, and to truncate the sum once the terms start to grow. Experiments indicate this works well. However, in the absence of known error estimates to provide mathematical justification of the truncation, we also embark on a more in depth discussion of truncation of the expansions in different regimes.

First, we limit the number of nodes to compute based on the size of the weights, which may underflow in floating point precision. Indeed, the weight $w_k$ behaves as $w(x_k) = x_k^\alpha \ee^{-Q(x_k)}$.
From \cref{EtranscStdLag} we find, after some calculations, that underflow at a threshold $\epsilon$ happens in the monomial case $Q(x) = q_m x^m$ when
\begin{equation}
  k < \frac{1}{4} -\frac{\alpha}{2} + \frac{(2 n + \alpha +1)}{\pi(2m-1)}\left( \frac{- m A_m \log \epsilon }{2n} \right)^{\frac{1}{2m}},
\end{equation}
where $A_m$ is given by \cref{EAm}. We simplify this expression for $\epsilon = 10^{-308}$ to the heuristical choice
\[
  k \leq \min(n, \exp[1.05 \ee^{\frac{1}{m}}] n^{1-\frac{1}{2m} }).
\]
This agrees with the $\mathcal{O}(\sqrt{n})$ subsampling complexity employed in \cite{TTOGauss} for standard Hermite. In the standard Laguerre case, our heuristic becomes
\begin{equation}
  k \leq \min(n, 17\sqrt{n}).
\end{equation}

A second heuristic concerns the number of terms $T$. Note that the leading order terms for the hard edge and the bulk in \cref{SexplStdGL} only give approximately double precision for $n > 10^8$. One can not simply use all available terms for all $n$, as the asymptotic expansion diverges for fixed $n$ and increasing $T$. Yet, since we adopt a minimal value of $n$, divergence is less of an issue for a certain predetermined accuracy. On the other hand, $T$ may actually decrease as $n$ increases, because fewer terms may be required to reach machine precision. More specifically, when $n$ squares, the number of terms can halve. Thus, $T$ depends inversely logarithmically on $n$. In order to obtain about $10^{-16}$ relative error at $n=128$, we need eight terms. This leads us to our heuristical choice
\begin{equation}
\label{EheurTLag}
 T =\lceil 34/\log(n) \rceil.
\end{equation}

A third heuristic involves the choice of the region, i.e., which expansion to use -- recall the domains shown in \cref{Fregions}. The sizes of the disks near the endpoints are flexible, as mentioned before. We transition between expansions at the values $k^\L$ and $k^\R$. The optimal values of the transition points depend on $T$, $n$, $\alpha$ and, more generally, on the weight function $w(x)$. At least for the standard associated Laguerre weight, the dependency on the parameters appears mild in comparison to the dependency on $n$. Experiments suggest the following scaling:
\begin{equation}
 k^{\L} = \lceil \sqrt{n} \rceil, \qquad \mbox{and} \qquad k^{\R} = \lfloor 0.9n \rfloor. \label{EquadrHeur}
\end{equation}
This corresponds approximately to the points $x_{k^{\L}} \approx \frac{\pi^2}{4}$ and $x_{k^{\R}} \approx 3.6n$.

\subsection{Accuracy} \label{SquadrResExpl}

\begin{figure}[t]
\centering
\includegraphics[width = 0.4\textwidth]{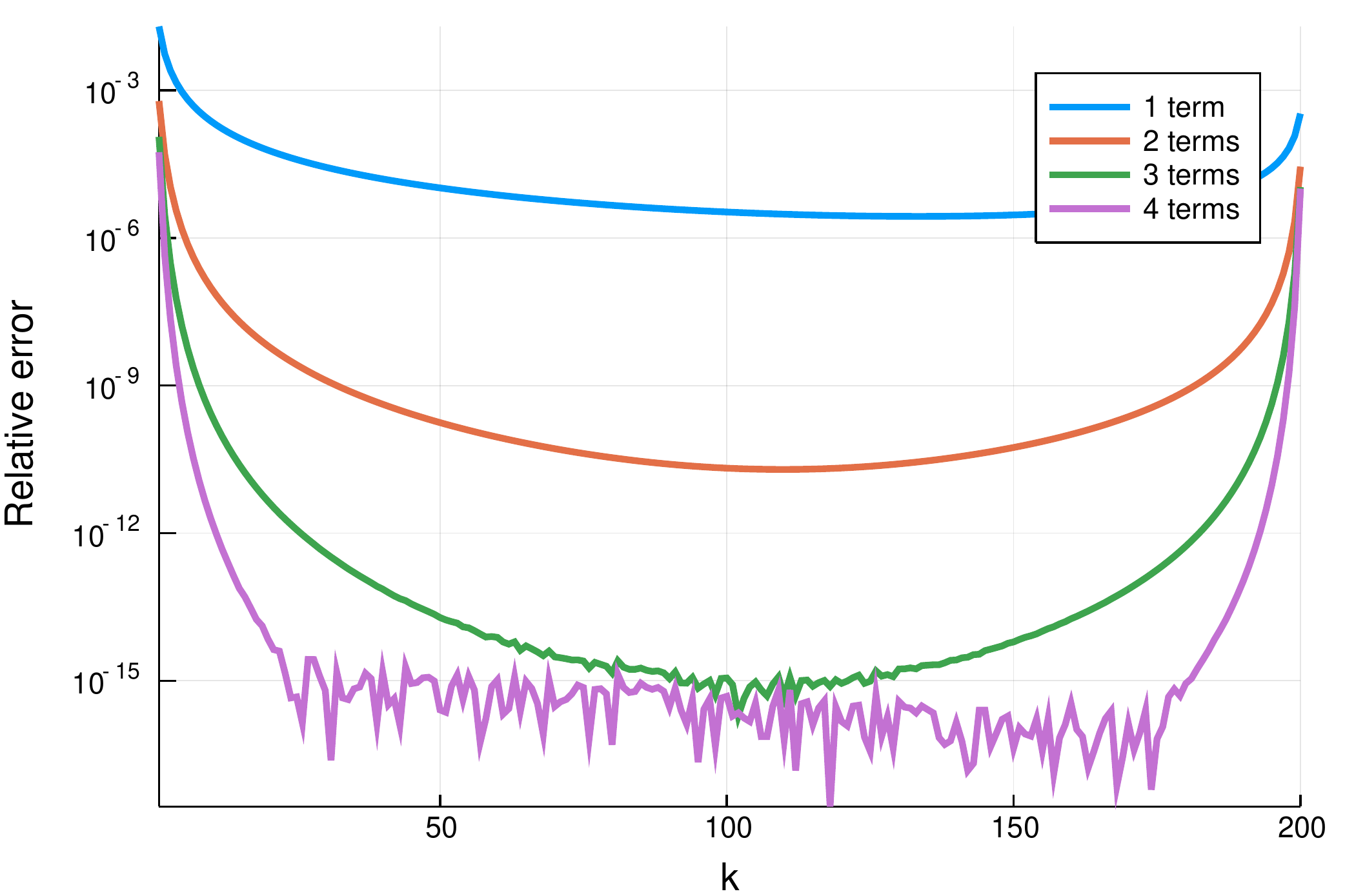}
\caption{Relative error of the nodes in the bulk \cref{EnodeLagBulk} for $n=200$ and $w(x) = x^{0.7} \ee^{-x}$ on $[0,\infty)$. The accuracy of the expression for the bulk deteriorates near the endpoints, hence the need for special boundary asymptotics.}
 \label{FerrNode}
\end{figure}
We validate the heuristics of \cref{SheurLag} by comparing to a reference solution computed using the recurrence relation in high-precision arithmetic. 
In \cref{FerrNode}, we show the relative error for the nodes in the asymptotic expansion \cref{EnodeLagBulk} for the bulk region. The relative error reaches machine precision accuracy in the bulk when using four terms, which corresponds to $\mathcal{O}(n^{-8})$ error. The nodes quickly become less accurate near the left and right endpoints, which is the reason why our heuristics in \cref{SheurLag} switch to another asymptotic expansion there.

In \cref{FerrWeiExpl}, we show the error of the explicit expansions of the weights \cref{EexplLagWeiLeft,EexplLagWeiBulk,EexplLagWeiRight} up to relative order $\mathcal{O}(n^{-2})$, $\mathcal{O}(n^{-4})$, $\mathcal{O}(n^{-6})$ and $\mathcal{O}(n^{-8})$ combined with the heuristic \cref{EquadrHeur} at $n=200$. As expected, increasing the number of terms in those expansions decreases the error. 

The expansions of the weights match closely when we switch from the left disk to the bulk near $k^{\L} = 15$. A steep jump in the relative error is seen near $k^{\R} = 180$ when we switch to Airy asymptotics. This is because the expansion of the weight in the latter region \cref{EexplLagWeiRight} is only accurate up to $\mathcal{O}(n^{-2/3})$. However, the first weight in this regime has size $w_{180} \approx 9 \times 10^{-222}$, hence the absolute error is very small. It is a remarkable property of asymptotic expansions that such a small weight can still be computed with some relative accuracy.

\begin{figure}[t]
\centering
\includegraphics[width = 0.4\textwidth]{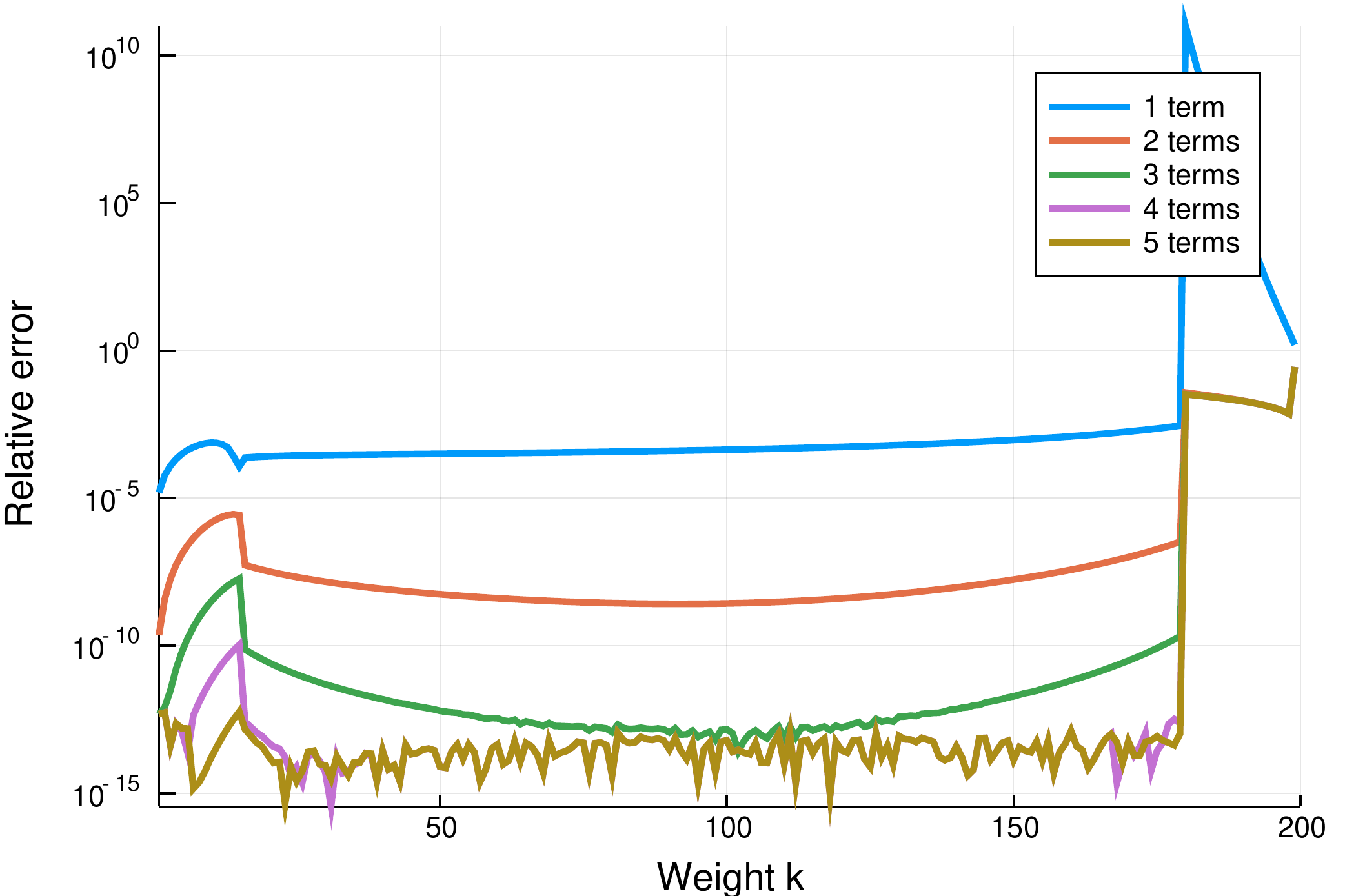}
\caption{Relative error of the weights for $n=200$ and $w(x) = x^{0.7} \ee^{-x}$ on $[0,\infty)$. At $k = 0.9 n = 180$ we switch to Airy asymptotics with fewer terms and seemingly less accuracy (visible as the spike in the figure). Yet, because $w_{180} < 10^{-220}$ in this case, the absolute error is still extremely small.}
\label{FerrWeiExpl}
\end{figure}

\subsection{Modified Gauss--Laguerre} \label{SexplGenGL}

We do not pursue all expansions in the full generality of the modification of the weight function \cref{Elaguerre_weight}, i.e., for all possible analytic functions $Q(x)$. In \cite{laguerre} we considered asymptotic expansions for the polynomials when $Q(x)$ is a monomial, a general polynomial of degree $m$, or a more general analytic function on the positive halfline for which the orthogonal polynomials exist. In principle, these expansions could be used to generate asymptotic expansions for the corresponding Gaussian quadrature rules. However, the level of complexity of the computation increases.

We briefly elaborate on some of the issues. First, the procedure for general polynomial $Q(x)$ yields fractional powers of $n$, due to expanding the MRS number $\beta_n$ in fractional powers of $n$ as shown in \cite[\S 3]{laguerre}. This increases the complexity of obtaining full asymptotic expansions quite substantially, as the computation of not one but several terms is required to increase the order by $1/n$.

Next, general functions $Q(x)$ require the calculation of contour integrals. Recall that a weight of the form $w(x)=x^\alpha \ee^{-Q(x)}$ leads to the coefficients $c_n$ and $d_n$ that were given by contour integrals \cref{Ecn2}--\cref{Edn2}. These contour integrals have to be computed, either analytically or numerically, since the coefficients appear in the expansions. One advantage of the monomial case $Q(x)=x^m$ is that the contour integrals \cref{Ecn2}--\cref{Edn2} can be evaluated analytically for all $m$. Other examples where these contour integrals can be found analytically are given in \cite[\S3.4]{laguerre} and \cite[(2.19)]{opsomer2018phd}. Similar contour integrals also appear in the formula for the phase function in the leading order term of the expansion of the orthogonal polynomial. Expressions are given in \cite[\S 6]{laguerre}, and they are the analogue of \cref{Elambda} for the Jacobi case.

Finally, since we have to numerically solve for the roots of the leading order term, recall \cref{Eleading_order}, with the leading order term for general $Q(x)$ itself involves requiring the evaluation of contour integrals, producing expansions may be quite time-consuming.

For these reasons, the expansions for modified Laguerre that we list explicitly in this paper are somehwat limited. Still, for a specific choice of weight function, the code may be used to produce expansions.

\paragraph{Hard edge.} 

The expansion of Laguerre-type nodes near the left endpoint is 
\begin{align}
	x_k & = \frac{16 j_{\alpha,k}^2 \beta_n}{d_0^2 (4n+2\alpha+2)} \bigg(1 + \frac{4[\alpha+1] [d_0 -2]}{d_0 (4n +2\alpha +2)} \label{EnodeLagLeftGen} \\
	& + \frac{4}{3c_0^2 d_0^3 (4n +2\alpha +2)^2} \bigg[ 9(\alpha^2 + 2\alpha + 1)c_0^2 d_0^3 + 2(22\alpha^2 + 36\alpha + 17)c_0^2 d_0 -(4\alpha^2 - 1) c_0^2 d_1  \\ 
	& - 3(12[\alpha^2 + 2\alpha + 1]c_0^2 - 2[2\alpha^2 + 4\alpha+ 1]c_0 + c_1)d_0^2 + 4(c_0^2 d_0 - 2c_0^2 d_1)j_{\alpha,k}^2 \bigg] \\
	& + \frac{16}{3 c_0^4 d_0^4 (4n +2\alpha +2)^3} \Big[ 6(\alpha^3 + 3\alpha^2 + 3\alpha + 1)c_0^4 d_0^4 - 3(28\alpha^3 + 60\alpha^2 +45\alpha + 13) c_0^4 d_0 \\ 
	& - 3\big(12\{\alpha^3 + 3\alpha^2 + 3\alpha + 1\}c_0^4 -4(2\alpha^3 + 6\alpha^2 + 5\alpha + 1)c_0^3 \\
	& + 2(2\alpha^3 + 6\alpha^2 + 3\alpha -1)c_0^2 - 6(\alpha + 1)c_1^2 + 5(\alpha + 1)c_0 c_2 + \{2(\alpha + 1)c_0^2 + (4\alpha^3+ 12\alpha^2 + 5\alpha - 3)c_0\} c_1\big)d_0^3 \\
	& + \big(4[22\alpha^3 + 58\alpha^2 + 53\alpha +17]c_0^4 - 3[16\alpha^3 + 40\alpha^2 + 29\alpha + 5]c_0^3 + 9[\alpha +1]c_0^2c_1\big)d_0^2 \\
	& + 8\big([\alpha + 1]c_0^4d_0^2 - 3(\alpha + 1)c_0^4 d_0 - [2(\alpha +1)c_0^4 d_0 - 5(\alpha + 1)c_0^4]d_1\big)j_{\alpha,k}^2 \\
	& - \big(2[4\alpha^3 + 4\alpha^2 - \alpha -1]c_0^4 d_0 - 5[4\alpha^3 + 4\alpha^2 - \alpha - 1]c_0^4\big)d_1 \Big] + \mathcal{O}(n^{-4}). 
\end{align}

\paragraph{Bulk.} 
Based on the asymptotic expansions in \cite[\S3.4]{laguerre}, equation \cref{Eleading_order} for the leading order specializes to
\begin{equation}
\pi(4k - 4n - 3) +(\alpha+1)2\arccos(2t-1) +\frac{n}{4i}  \int_1^t \frac{\sqrt{y-1}}{\sqrt{y}} \left[\frac{1}{2\pi i}\oint_{\Gamma_y} \frac{\beta_n \sqrt{x} Q^\prime(x) \dint x}{n \sqrt{x-1}(x-y)}\right] \dint y = 0.
\end{equation}
The contour $\Gamma_y$ should enclose the interval $[0,1]$ and the point $y$. 

Unfortunately, this expression does not simplify even in the monomial case.

\begin{figure}[t]
\centering
\includegraphics[width=0.8\hsize]{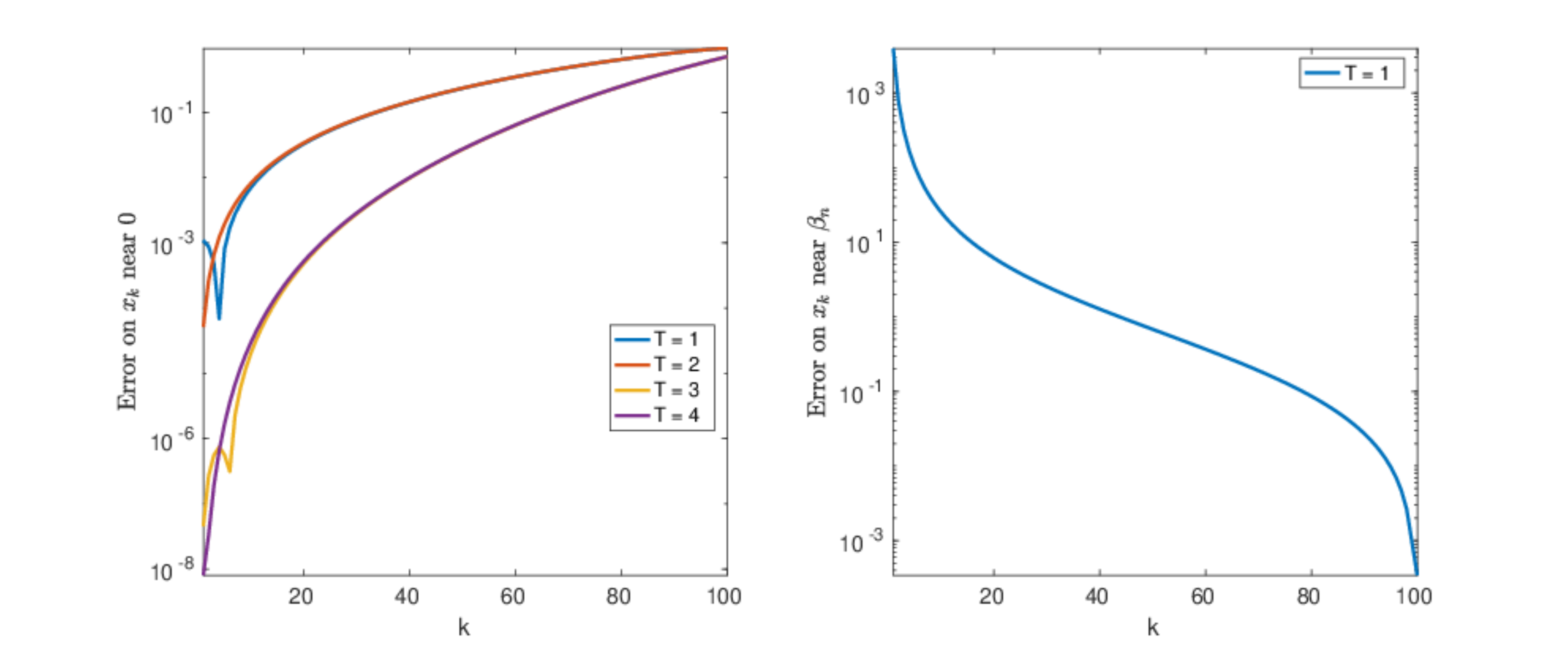}
\caption{Relative error of the explicit expansions of the nodes \cref{EnodeLagLeftGen} near the left endpoint (left panel) and \cref{ExkSoft} near the soft edge (right panel) for the weight function $w(x) = \ee^{-\ee^x}$ on $[0,\infty)$ and $n=100$.}
\label{FGL0espErrExpa}
\end{figure}

\paragraph{Soft edge.} 
We only provide the leading order term near the soft edge, as the corresponding weights underflow even for moderate $n$. 
We have
\begin{equation}
 x_k \sim \beta_n \left[ 1+ \left( \frac{2}{n c_0} \right)^{2/3} a_{n-k+1}  + o(n^{-2/3}) \right] \label{ExkSoft} 
\end{equation}
and
\begin{align}
w_k^{-1} & \sim 2 \beta_n^{n-1}\beta_{n-1}^{1-n-\alpha-1}\exp\left[ n l_n/2 -(n-1) l_{n-1}/2 \right] \left(\frac{\beta_n-\beta_{n-1}}{\beta_{n-1}}  + \left(\frac{2}{nc_0}\right)^{2/3} a_{n-k+1} \right)^{-1/4} \\
	& \exp[Q(x_k)] Ai'(a_{n-k+1})^2 \left(c_0/2\right)^{3/2} n^{1/2} a_{n-k+1}^{1/4}.
\end{align}
Here, recall that $a_m$ are the roots of the Airy function. The constant $l_n$ is given for general Laguerre-type weight functions in \cite[\S3.2]{laguerre} and also involves an integral to be evaluated numerically.

In \cref{FGL0espErrExpa}, we can see that the expansions near the hard and the soft edge give good approximations. Moreover, increasing the number of terms $T$ near the hard edge decreases the error, with results shown using $n=100$ for the nodes of a Gauss--Laguerre-type quadrature rule with respect to the doubly-exponential weight function 
\begin{equation}
	w(x) = \exp(-\ee^x).
\end{equation}
In that case we have
\[
 \beta_n \sim \log(n) - \log(\log(8\pi n^2))/2 + \log(8\pi)/2.
\]

\section{Gauss--Jacobi rules}\label{SJacobi} 

\subsection{Standard Gauss--Jacobi} \label{SexplStdGJ}

The polynomials exhibit Bessel-like behaviour near both hard edges at $+1$ and $-1$. Similar to the Laguerre case, the expansions were computed in terms of inverse powers of $n$, but the results are presented in shorter form using inverse powers of $(2n+\alpha+\beta+1)$ (in part following the notation of \cite{pitta}). The expansion for the nodes $x_k$ near $x=-1$ are given in~\eqref{EexkStdJacBes} up to $\mathcal{O}(n^{-12})$. The corresponding weights $w_k$ are given up to relative order $\mathcal{O}(n^{-8})$, relative to the size of $w(x_k)$, in \eqref{EewkStdJacBes}. Near the right endpoint at $x=+1$, we can simply interchange $\alpha$ and $\beta$ in these expressions, and multiply $x_{n+1-k}$ by $-1$ for $k=1,\ldots,n$.

For the nodes and weights in the bulk, we first have to find the roots of the leading order term of the polynomials. In the modified Jacobi case the generic equation \cref{Eleading_order} specializes to \cref{EtkBulkJacModif} further on. In the current classical case, there is no contour integral and the roots are obtained with the explicit expression
\begin{equation}
\label{EtkBulkJacClassical}
 t_k = \cos\left( \pi  \frac{4n-4k + 2\alpha +3}{4n+2\alpha+2\beta+2}\right).
\end{equation}
The expansion of the nodes up to $\mathcal{O}(n^{-10})$ and weights up to (relative order) $\mathcal{O}(n^{-8})$ are given in~\eqref{EexkStdJacBulk} and~\eqref{EewkStdJacBulk} respectively.

\subsection{Heuristical choices}
\label{SheurJac}

We are faced with similar choices as in the Laguerre case in \cref{SheurLag}, namely the choice of the number of terms $T$ in the expansion and the choice of the expansion to use. Due to the difference in nature with Gauss-Laguerre and because we have fewer terms available in the current case, we adjust our assumption for $n$: we suggest to use the recurrence relation up to about $n \leq 300$ for double precision accuracy. We assume that the asymptotics validly apply above this limit if in addition $(\alpha^2 + \beta^2) < n$. Since there is no underflow associated with the weights of Gauss--Jacobi at moderate $n$, we compute all nodes and weights.

Based on our aim of achieving machine precision in double precision accuracy, we heuristically choose
\[
T = \lceil 50 / \log(n) \rceil.
\]
This is slightly higher than the corresponding choice \cref{EheurTLag} in the Laguerre case.

For the switch between the asymptotic regimes, we have settled on the choices
\begin{equation}
 k^{\L} = \lceil \sqrt{n} \rceil, \qquad \mbox{and} \qquad k^{\R} = n - \lceil \sqrt{n} \rceil.
\end{equation}
This corresponds to switching approximately at the points $x_{k^{\L}} = \frac{\pi^2}{2n} -1$ and $x_{k^{\R}} = 1 - \frac{\pi^2}{2n}$.

\subsection{Accuracy} 

In \cref{Fjacobi_accuracy}, we show the maximal absolute or relative error over all nodes or weights at each integer value of $n$ between $1$ and $400$. The asymptotic expansions are compared to a computation of the corresponding Gaussian quadrature rule using the recurrence relation in higher precision arithmetic. This allows to assess the accuracy to all digits.

\setlength{\figurewidth}{9cm}
\setlength{\figureheight}{7cm}
\begin{figure}
 \centering
 \begin{small}
\input{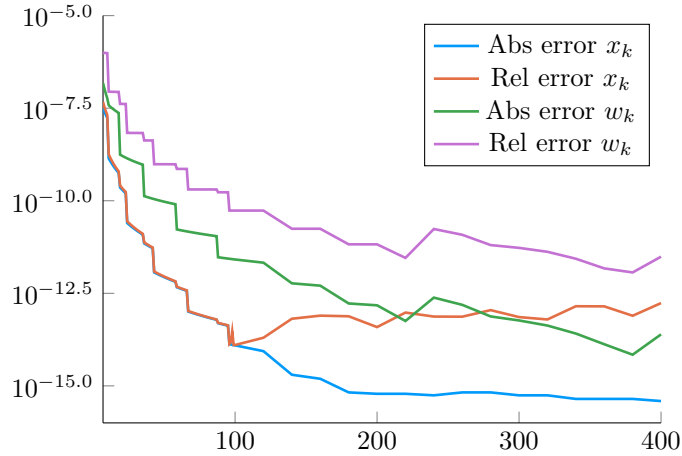}
\end{small}
\caption{Maximal absolute and relative error of the points and weights of all Jacobi quadrature rules for $n$ up to $400$ for the weight function $w(x) = (1-x)^{0.42} (1+x)^{-1/\sqrt{5}}$.}
\label{Fjacobi_accuracy}
\end{figure}

\subsection{Modified Gauss--Jacobi} \label{SexplGenGJ}

For the sake of brevity, we only give the first few terms and indicate how many were computed in our implementation. Recall that these expressions are symbolic because they contain the parameters $\alpha$ and $\beta$. When numeric values for $\alpha$ and $\beta$ are supplied, more terms can be readily computed with modest effort. In contrast to the case of modified Laguerre, the results here are quite complete.

\paragraph{Hard edge at $x=+1$.} 
The expansions near the left endpoint $x=-1$ are 
\begin{align}
	x_k & \sim -1 +\frac{2j_{\beta,k}^2}{ (2n+\alpha+\beta+1-d_0)^2} + \frac{-2j_{\beta,k}^2}{3 (2n+\alpha+\beta+1-d_0)^4} \left[j_{\beta,k}^2 - 3 \alpha ^2 - \beta^2 + 1 \right] \label{EexkGenJacBes} \\
	& + \frac{-j_{\beta,k}^2}{6 (2n+\alpha+\beta+1-d_0)^5}\left[ \vphantom{\frac{-j_{\beta,k}^2}{6 (2n+\alpha+\beta+1-d_0)^5}} 16 ( d_0  - 3  d_1 ) j_{\beta,k}^4 + 3 (4 \alpha ^2 -1)  c_0  + (12 \alpha ^2 + 8 \beta ^2 - 5)  d_0  - 6 (4 \beta ^2 - 1)  d_1 \right] \\
	& + \ldots + \mathcal{O}(n^{-9}).
\end{align}
The corresponding weights are asymptotically
\begin{align}
	\frac{w_k}{w(x_k)} & \sim \frac{8}{ J_{\beta-1}^2 (j_{\beta,k}) [2n+\alpha+\beta+1-d_0]^2} \label{EewkGenJacBes} \\
	&+ \frac{8}{3 J_{\beta-1}^2 (j_{\beta,k}) [2n+\alpha+\beta+1-d_0]^4} \left[ \vphantom{\frac{-j_{\beta,k}^2}{6 (2n+\alpha+\beta+1-d_0)^5}} 3 \alpha ^2 + \beta^2 - 1 -2j_{\beta,k}^2 \right]  \\
	& - \frac{2\left[ \vphantom{\frac{-j_{\beta,k}^2}{6 (2n+\alpha+\beta+1-d_0)^5}} 32(d_0 - 3 d_1)j_{\beta,k}^2 + 3(4\alpha^2 - 1) c_0 + (12\alpha^2 + 8\beta^2 - 5)d_0 - 6(4\beta^2 - 1)d_1 \right]}{3 J_{\beta-1}^2 (j_{\beta,k}) [2n+\alpha+\beta+1 -d_0]^5}  \\
	& + \ldots + \mathcal{O}(n^{-8}). 
\end{align}

\paragraph{Hard edge at $x=+1$.} Near the right endpoint at $x=+1$, we can simply interchange $\alpha$ and $\beta$ in the expressions above, and multiply $x_{n+1-k}$ by $-1$ for $k=1,\ldots,n$. This implies that one has to substitute $h(-x)$ for $h(x)$ when recomputing the expansions coefficients $c_n$ and $d_n$.

\paragraph{Bulk.}
For the expansions in the bulk, equation \cref{Eleading_order} for the parameter $t_k$ becomes 
\begin{align}
	\pi  \frac{4k + 2\alpha +3}{4n+2\alpha+2\beta+2}  & = \arccos(t_k) + \frac{\sqrt{1-t_k^2}}{2n+\alpha+\beta+1} \frac{1}{2\pi i} \oint_\gamma \frac{\log h(\zeta) \dint \zeta}{\sqrt{\zeta^2-1} (\zeta -t_k)}. \label{EtkBulkJacModif}
\end{align}
This corresponds to the expression in \cite{pitta}, with a minor modification of the denominator and the addition of a contour integral for our generalised case.

One needs to compute the series expansion coefficients of the latter:
\begin{align}
	\frac{1}{2\pi i} \oint_\gamma \frac{\log h(\zeta) \dint \zeta}{\sqrt{\zeta^2-1} (\zeta -z)} & \sim \sum_{i=0}^\infty h_i (z-t_k)^i. \label{EcintloghSeries} 
\end{align}
In the absence of an analytical approach for the evaluation of these integrals, they have to be computed numerically for each $k$. Examples of cases where they can be evaluated analytically are given in \cite[\S 6.1]{jacobi}.

Still, in terms of the coefficients $h_i$, the expansion for the nodes in the bulk region is, up to $\mathcal{O}(n^{-5})$:
\begin{align}
	x_k & \sim t_k + \frac{2\alpha^2 - 2\beta^2 + (2\alpha^2 + 2\beta^2 - 1)t_k}{2[2n + \alpha + \beta + 1 + h_0]^2} - \frac{1}{4[2n + \alpha + \beta + 1 + h_0]^3} \left(\vphantom{\frac{1^2}{x^2}} 2(2\alpha^2 + 2\beta^2 - 1)h_1 t_k^3 \right. \label{EexkGenJacBulk} \\ 
	& + 2\left[(2\alpha^2 + 2\beta^2 - 1)h_0 + 2(\alpha^2 - \beta^2)h_1\right]t_k^2  + (4\alpha^2 - 1)c_0 + (4\beta^2 - 1)d_0 + 8(\alpha^2 - \beta^2)h_0 - 4(\alpha^2 - \beta^2)h_1 \\ 
	& \left. + \left[(4\alpha^2 - 1)c_0 - (4\beta^2 - 1)d_0 + 4(3\alpha^2 + \beta^2 - 1)h_0 - 2(2\alpha^2 + 2\beta^2 - 1)h_1 \right]t_k\vphantom{\frac{1^2}{x^2}} \right) + \ldots + \mathcal{O}(n^{-5}).
\end{align}
The corresponding weights are, up to $\mathcal{O}(n^{-4})$ relative error (relative to $w(x_k)$):
\begin{align}
	\frac{w_k}{w(x_k)} & \sim \frac{\pi\sqrt{1-t_k^2}}{2n + \alpha + \beta + 1} \bigg[2 - \frac{2h_1(1-t_k^2) -2h_0 t_k}{2n + \alpha + \beta + 1} + \frac{1}{(2n + \alpha + \beta + 1)^2} \big( 2h_1^2 t_k^4 + 4 h_0 h_1 t_k^3 - 4 h_0 h_1 t_k \\ 
	& + 2(h_0^2 - 2h_1^2)t_k^2 + 2\alpha^2 + 2\beta^2 + 2h_1^2 - 1\big) + \ldots + \mathcal{O}(n^{-4}) \bigg]. \label{EewkGenJacBulk} 
\end{align}

\begin{figure}[t]
\centering
\includegraphics[width=0.8\hsize]{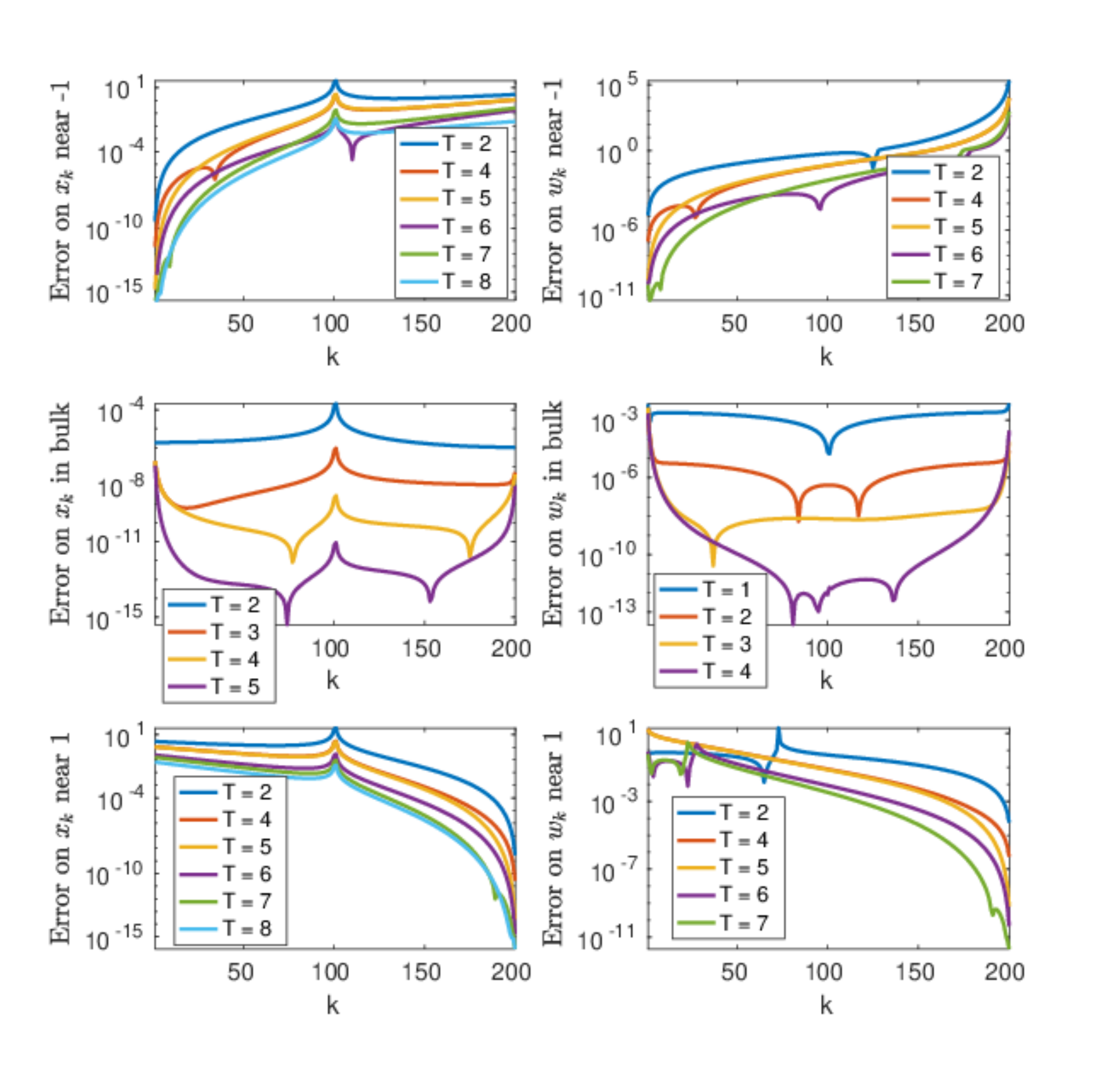}
\caption{Relative error of the explicit expansions of the nodes \cref{EexkGenJacBes} and weights \cref{EewkGenJacBes} near the left endpoint (top), and the nodes \cref{EexkGenJacBulk} and weights \cref{EewkGenJacBulk} in the bulk (middle) for the weight function $w(x) = (1-x)^{1/\sqrt{3}} (1+x)^{-1/\pi} \ee^x$ at $n=200$ for a varying number of terms $T$. 
The expressions for the left endpoint are reused for those near the right endpoint (bottom) by interchanging $\alpha$ and $\beta$ and computing $c_n$ and $d_n$ with $h(x) = \ee^{-x}$.}
\label{FGJsq3piexpoErrExpa}
\end{figure}

We illustrate the accuracy of these expansions throughout the domain $[-1,1]$ in \cref{FGJsq3piexpoErrExpa} for $n=200$ for the weight function 
\[
  w(x) = (1-x)^{1/\sqrt{3}} (1+x)^{-1/\pi} \ee^x.
\] 
The results show that the expansions improve with $T$ and that their accuracy is very high in the regions where they are valid.

\section{Gauss--Hermite rules} \label{SquadrGH} 

The standard Gauss--Hermite polynomials are related to associated Gauss--Laguerre polynomials with $\alpha = \mp 1/2$ \cite[18.7.19-20]{DLMF}. Written in terms of the orthonormal polynomials, we have the relations
\begin{equation}
  p_{2n}^{(\qh)}(x) = p_n^{(\ql, -\frac12)}(x^2), \qquad p_{2n+1}^{(\qh)}(x) = x p_n^{(\ql,\frac12)}(x^2). \label{EhermNormLag}
\end{equation}
This generalises to other functions $Q(x)$ with $\alpha = \mp 1/2$ as was shown in \cite[\S 7.2]{laguerre}. Using the change of variables $x = t^2$, we have
\begin{equation}
\label{ELagHermite}
 \int_0^\infty x^j \, p_n(x) \, x^{-\frac12} \ee^{-Q(x)} \, {\rm d}x = 2 \int_0^\infty t^{2j} \, p_n(t^2) \ee^{-Q(t^2)} \, {\rm d}t = \int_{-\infty}^\infty t^{2j} \, p_n(t^2) \ee^{-Q(t^2)} \, {\rm d}t = 0, \quad j=0,\ldots,2n-1.
\end{equation}
Since the odd moments of $p_n(t^2)$ vanish by symmetry, it follows that $p_n(t^2)$ is the Hermite-type polynomial of degree $2n$ with respect to the weight function $\ee^{-Q(t^2)}$ on the real line.

Similarly, for the odd-degree polynomials we have
\begin{equation}
\label{ELagHermiteOdd}
 \int_0^\infty x^j \, p_n(x) \, x^{\frac12} \ee^{-Q(x)} \, {\rm d}x = \int_{-\infty}^\infty t^{2j+1} \, t\, p_n(t^2) \ee^{-Q(t^2)} \, {\rm d}t = 0, \quad j=0,\ldots,2n-1.
\end{equation}
In this case, the even moments vanish due to the odd symmetry of $p_n(t^2)$.

It remains to determine the connection between the corresponding Gaussian quadrature nodes and weights.

\begin{lemma}
Let $x_k^{(\ql,-\frac12)}$ and $w_k^{(\ql,-\frac12)}$, $k=1,\ldots,n$, be the nodes and weights for the modified Gauss--Laguerre rule with weight function $x^{-\frac12} \ee^{-Q(x)}$. Then the nodes and weights of the modified Gauss--Hermite rule with weight function $\ee^{-Q(x^2)}$ on $(-\infty,\infty)$ are, with $k=1,\ldots,n$,
\begin{equation}
	x_{n+k}^{(\qh)} = \sqrt{x_k^{(\ql, -\frac12)}}, \quad w_{n+k}^{(\qh)} = \frac{w_{k}^{(\ql,-\frac12)}}{2}, \quad x_{n+1-k}^{(\qh)} = -\sqrt{x_k^{(\ql, -\frac12)}}, \quad w_{n+1-k}^{(\qh)} = \frac{w_{k}^{(\ql,-\frac12)}}{2}.
\end{equation}
\end{lemma}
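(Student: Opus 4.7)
The plan is to combine two ingredients already provided in the text: the identification of $p_n^{(\ql,-1/2)}(t^2)$ as the degree-$2n$ orthogonal polynomial with respect to $\ee^{-Q(t^2)}$ on $(-\infty,\infty)$, established via \eqref{ELagHermite}, together with the substitution $x = t^2$ used there. From this, I extract both the node identities and the weight identities.

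First, I would handle the \emph{nodes}. Since the degree-$2n$ Hermite-type polynomial equals $p_n^{(\ql,-1/2)}(t^2)$, its zeros in $t$ are precisely the values for which $t^2$ equals a root $x_k^{(\ql,-1/2)}$ of $p_n^{(\ql,-1/2)}$. As all these Laguerre-type roots lie in $(0,\infty)$ and are simple, we obtain exactly $2n$ real simple Hermite-type zeros $\pm\sqrt{x_k^{(\ql,-1/2)}}$, and the index relabeling in the statement is just the convention that the $2n$ Hermite-type nodes are listed in increasing order (negative roots in positions $1,\dots,n$, positive roots in positions $n+1,\dots,2n$).

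Next, for the \emph{weights}, the plan is to test Gauss--Hermite-type quadrature against an arbitrary polynomial $p(t)$ of degree at most $4n-1$ and match it to the known exactness of the Gauss--Laguerre-type rule with weight $x^{-1/2}\ee^{-Q(x)}$ on $(0,\infty)$. Splitting $p = p_{\text{even}} + p_{\text{odd}}$, the odd part integrates to zero against the even weight $\ee^{-Q(t^2)}$. Writing $p_{\text{even}}(t) = P(t^2)$ with $\deg P \le 2n-1$, the substitution $x = t^2$ turns
\[
\int_{-\infty}^{\infty} p(t)\, \ee^{-Q(t^2)}\, \dint t = 2\int_0^\infty P(t^2)\, \ee^{-Q(t^2)}\, \dint t = \int_0^\infty P(x)\, x^{-1/2}\ee^{-Q(x)}\, \dint x,
\]
which by exactness of the modified Gauss--Laguerre rule equals $\sum_{k=1}^n w_k^{(\ql,-1/2)} P(x_k^{(\ql,-1/2)})$. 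Using $P(x_k^{(\ql,-1/2)}) = p_{\text{even}}(\pm\sqrt{x_k^{(\ql,-1/2)}}) = \tfrac12[p(\sqrt{x_k^{(\ql,-1/2)}}) + p(-\sqrt{x_k^{(\ql,-1/2)}})]$ (since $p_{\text{odd}}$ cancels), we recover a quadrature sum at the $2n$ Hermite-type nodes with each pair contributing weight $w_k^{(\ql,-1/2)}/2$. Because this rule is exact for all polynomials up to degree $4n-1$ and has $2n$ nodes, it is the (unique) Gauss--Hermite-type rule, which identifies the weights as stated.

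The only step that requires any care is ensuring the dual use of symmetry: the vanishing of $\int p_{\text{odd}}(t)\ee^{-Q(t^2)}\dint t$ on one hand, and the invariance $P(\sqrt{x_k}) = P(-\sqrt{x_k})$ pairing the two Hermite-type nodes to a single Laguerre-type node on the other. Both are straightforward once the change of variables is written out, so I do not anticipate a genuine obstacle; the argument is essentially a symmetry-plus-substitution calculation supported by the uniqueness of the $2n$-point Gaussian rule for the weight $\ee^{-Q(t^2)}$.
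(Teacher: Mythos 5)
Your proposal is correct and follows essentially the same route as the paper: the nodes come from the change of variables $x=t^2$ in \eqref{ELagHermite}, and the weights are identified by verifying the $4n$ moment (exactness) conditions — even degrees via the substitution and the exactness of the Laguerre-type rule, odd degrees by symmetry — and invoking the characterization of the $2n$-point Gaussian rule. Working with a general polynomial split into even and odd parts rather than with monomials is an equivalent presentation of the same argument.
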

\begin{proof}
The result for the nodes follows immediately from the change of variables in \cref{ELagHermite}. The results for the weights follow by applying the same change of variables to the moment conditions of the Laguerre-type quadrature rule. Since
\[
 \int_0^\infty x^j x^{-\frac12} \ee^{-Q(x)} \, {\rm d}x = \sum_{k=1}^n w_k^{(L,-\frac12)} \left(x_k^{(L,-\frac12)}\right)^j, \qquad j=0,\ldots,2n-1,
\]
we have
\[
 \int_{-\infty}^\infty t^{2j} \ee^{-Q(t^2)} \, {\rm d}t = \sum_{k=1}^n w_k^{(L,-\frac12)} \left(x_k^{(L,-\frac12)}\right)^{j} = \sum_{k=1}^{2n} w_k^{(H)} \left(x_k^{(H)}\right)^{2j}, \qquad j=0,\ldots,2n-1.
\]
The moment conditions for the odd monomials $t^{2j+1}$, $j=0,\ldots,2n-1$, follow from the odd symmetry.
\end{proof}

The result for Gauss--Hermite type rules with an odd number of points is similar, with some complications only for the extra node at the origin.

\begin{lemma}
The nodes and weights of a $2n+1$ point modified Gauss--Hermite quadrature rule are given in terms of the modified Gauss--Laguerre quadrature rule with $\alpha=+\frac12$ as, with $k=1,\ldots,n$,
\begin{align}
	x_{n+1+k}^{(\qh)} & = \sqrt{x_k^{(\ql, \frac12)}}, & & w_{n+1+k}^{(\qh)} = \frac{w_{k}^{(\ql,\frac12)}}{2x_k^{(\ql,\frac12)} }, \\
	x_{n+1}^{(\qh)} & = 0, & & w_{n+1}^{(\qh)} = \int_{-\infty}^\infty \ee^{-Q(t^2)} \, {\rm d}t - \sum_{k=1}^n \left[w_{n+1+k}^{(H)} + w_{n+1-k}^{(H)}\right], \\ 
	x_{n+1-k}^{(\qh)} & = -\sqrt{x_k^{(\ql, 1/2)}}, & & w_{n+1-k}^{(\qh)} = \frac{w_{k}^{(\ql,\frac12)}}{2x_k^{(\ql,\frac12)} }.
\end{align}
\end{lemma}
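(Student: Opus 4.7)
The plan is to mirror the proof of the preceding (even-case) lemma, while carefully tracking the extra node at the origin that appears for odd-degree rules. The key input is the polynomial identity $p_{2n+1}^{(H)}(x) = x\, p_n^{(L,\tfrac12)}(x^2)$ recalled in \cref{EhermNormLag}. From this I immediately read off that the zeros of $p_{2n+1}^{(H)}$ are $0$ and $\pm\sqrt{x_k^{(L,1/2)}}$ for $k=1,\ldots,n$, which gives the claimed node locations. The symmetry $w_{n+1+k}^{(H)} = w_{n+1-k}^{(H)}$ follows because the Hermite--type weight $e^{-Q(t^2)}$ is even, so the Gaussian weights at symmetric nodes coincide.

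Next I would determine the non-zero weights by transporting the exactness conditions of the $n$-point modified Gauss--Laguerre rule with $\alpha=\tfrac12$ through the change of variables $x = t^2$, $dx = 2t\,dt$. For $j' = 0,\ldots,2n-1$,
\begin{align*}
 \sum_{k=1}^n w_k^{(L,1/2)} \left(x_k^{(L,1/2)}\right)^{j'}
 &= \int_0^\infty x^{j'+1/2}\,e^{-Q(x)}\,dx
 = 2\int_0^\infty t^{2j'+2}\,e^{-Q(t^2)}\,dt
 = \int_{-\infty}^\infty t^{2(j'+1)}\,e^{-Q(t^2)}\,dt,
\end{align*}
where the last step uses the even integrand. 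Setting $y_k = \sqrt{x_k^{(L,1/2)}}$ and $j = j'+1 \in \{1,\ldots,2n\}$, the right-hand side equals the exact value of $\int t^{2j} e^{-Q(t^2)}dt$. On the quadrature side, the contribution of the origin vanishes for $j\geq 1$ and the symmetric pair of nodes $\pm y_k$ contributes $2 w_{n+1+k}^{(H)} y_k^{2j}$, so matching with $\sum_k w_k^{(L,1/2)} y_k^{2(j-1)}$ forces
\[
 w_{n+1+k}^{(H)} = \frac{w_k^{(L,1/2)}}{2\,x_k^{(L,1/2)}},
\]
which is the claimed formula.

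The remaining weight at the origin is then fixed by the single missing exactness condition, namely the integration of the constant function $1$: applying the Hermite rule to $1$ yields $w_{n+1}^{(H)} + \sum_{k=1}^n (w_{n+1+k}^{(H)} + w_{n+1-k}^{(H)})$, and setting this equal to $\int_{-\infty}^\infty e^{-Q(t^2)}dt$ produces the stated expression for $w_{n+1}^{(H)}$. The odd moment conditions $\int t^{2j+1} e^{-Q(t^2)} dt = 0$ are automatic from the symmetry of the nodes and weights, so no further verification is needed.

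The main thing to be careful about is bookkeeping: the shift $j = j'+1$ in the moment index means the Laguerre exactness for degrees $0,\ldots,2n-1$ translates into Hermite exactness for the even monomials $t^2,\ldots,t^{4n}$, leaving only the constant monomial unaccounted for---which is exactly why the origin weight must be determined separately by a direct moment match. Once that accounting is clear, the argument is a routine consequence of the change of variables already exploited in the preceding even-case lemma.
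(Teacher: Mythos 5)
Your proposal is correct and follows essentially the same route as the paper: transport the Laguerre moment conditions through the substitution $x=t^2$, observe that the odd Hermite moments vanish by symmetry and the even moments $t^{2j}$ with $j\geq 1$ reduce to Laguerre exactness (the origin node contributing nothing), and fix the central weight by the remaining $m=0$ condition. The only difference is presentational: you derive the weights by matching term by term (implicitly using uniqueness of the solution to the Vandermonde system at the distinct nodes), whereas the paper simply verifies that the stated weights satisfy all moment conditions; both are valid.
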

\begin{proof}
The moment conditions for the modified Laguerre rule are
\[
 \int_0^\infty x^j x^{\frac12} \ee^{-Q(x)} \, {\rm d}x = \int_{-\infty}^\infty t^{2j+2} \, \ee^{-Q(t^2)} \, {\rm d}t= \sum_{k=1}^n w_k^{(L,\frac12)} \left(x_k^{(L,\frac12)}\right)^j, \qquad j=0,\ldots,2n-1.
\]
The modified Gauss--Hermite rule should satisfy
\[
 \int_{-\infty}^\infty t^m \, \ee^{-Q(t^2)} \, {\rm d}t = \sum_{k=1}^{2n+1} w_k^{(H)} \left(x_k^{(H)}\right)^m, \qquad m=0,\ldots,4n+1.
\]
The odd moments are satisfied by symmetry. The even moments are satisfied for $m = 2j+2 > 0$ using the exactness conditions for Laguerre above, noting that $\left(x_{n+1}^{(H)}\right)^m=0$ since $m > 0$. The moment condition for $m=0$ is satisfied precisly by the weight formulated in the lemma.
\end{proof}

There is no known general formula for the middle weight $w_{n+1}^{(\qh)}$. However, in case of the standard Gauss--Hermite rule once can find from \cite[\S 3.5 \& 18.6]{Olver:2010:NHMF} that
\[
 w_{n+1}^{(\qh)} = \frac{\pi \Gamma(n+1)}{(2n+1)\Gamma(n + \frac12)}.
\]
For large $n$, the ratio of Gamma functions can be approximated much like in \cite[\S 3.2.3]{HaleTownsend} as
\begin{align}
	\frac{\Gamma(n+1)}{\sqrt{n}\Gamma(n+1/2)} & \sim 1 + \frac{1}{8n} + \frac{1}{128 n^2} -\frac{5}{1024 n^3} -\frac{21}{32768 n^4} + \frac{399}{262144 n^5} + \frac{869}{4194304 n^6} \\
	& - \frac{39325}{33554432 n^7} - \frac{334477}{2147483648 n^8} + \frac{28717403}{17179869184 n^9} + \frac{59697183}{274877906944 n^{10}}, \quad n \rightarrow \infty.
\end{align}

In order to construct the Gauss-Hermite rule, one simply computes the Gauss-Laguerre rules from \cref{SexplStdGL} with approximately half the number of points and then exploits these relations.

The connection to Laguerre-type polynomials seemingly obviates the need to repeat the asymptotic analysis for Hermite-type polynomials, at least for even symmetric weight functions of the form $\ee^{-Q(x^2)}$. It may still be worthwhile to repeat the analysis for symmetric weight functions, since the Hermite-type polynomials do not have a hard edge at $x=0$. Hence, the computation of their expansion is simpler and it becomes more tractable to compute more terms. For example, for symmetric weight functions and $\alpha = \pm 1/2$ the $U_{k,m}^{\L}$ matrices in \cite{Vanlessen,laguerre} are zero. The analysis may also be useful for non-symmetric weight functions and for algebraic singularities near zero, i.e. $\alpha \neq \pm 1/2$.

\section*{Acknowledgements}
The authors gratefully acknowledge financial support from FWO (Fonds Wetenschappelijk Onderzoek, Research Foundation - Flanders, Belgium), through FWO research projects G.0617.10, G.0641.11 and G.A004.14. The authors thank Ronald Cools, Vincent Copp{\'e}, Arno Kuijlaars, Alex Townsend and Marcus Webb for their suggestions to improve the manuscript and their help in the numerical examples.

\appendix

\section{Expansions for classical Gaussian quadrature rules}
\label{app:explicit}

We include explicit expressions for the asymptotic expansions of nodes and weights of Gauss--Laguerre and Gauss--Jacobi quadrature rules. Expressions for Gauss--Legendre can be found in \cite{BogaertIterationFree}. Earlier results on the classical rules include, among others, \cite{hahn,baraGatt,gil2016,gatteschi1985zeros,tricBess,tricBulk,gatt,alfLag,GilSeguraLaguerre,GST,GilSeguraQuadr}.

\subsection{Gauss--Laguerre}
\label{app:laguerre}

Asymptotic expansions for the nodes and weights of Gauss--Laguerre quadrature rules were described concurrently in \cite{GST,opsomer2018phd}. Here, we recall the explicit expansions as they are derived and presented in \cite{opsomer2018phd}.

\paragraph{Hard edge.} Denote by $j_{\alpha,k}$ the $k$-th root of the Bessel function of order $\alpha$. An explicit expansion for the nodes near the hard edge at $x=0$ is:
\begin{align} 
	x_k & = \frac{j_{\alpha,k}^2}{4n+2\alpha+2} + \frac{j_{\alpha,k}^2\left(j_{\alpha,k}^2  +2 \alpha^2 -2\right)}{3(4n+2\alpha+2)^3} \label{EnodeLagLeft}  \\ 
	&  + \frac{j_{\alpha,k}^2\left( 11 j_{\alpha,k}^4  + 3 j_{\alpha,k}^2 (11\alpha^2 -19) + 46\alpha^4 - 140\alpha^2 + 94 \right)}{45(4n+2\alpha+2)^5}  \\ 
	& + \frac{ j_{\alpha,k}^2}{3^4 35 (4n+2\alpha+2)^7}\left[657 j_{\alpha,k}^6 + 36 j_{\alpha,k}^4 (73\alpha^2 - 181) \right.  \\
	& \left. + 2 j_{\alpha,k}^2 (2459\alpha^4 - 10750\alpha^2 + 14051) + 4 (1493\alpha^6 - 9303\alpha^4 + 19887\alpha^2 - 12077) \right] \\
	& + \frac{ j_{\alpha,k}^2}{3^5 5^2 7 (4n+2\alpha+2)^9}\left[ 10644 j_{\alpha,k}^8 + 60 j_{\alpha,k}^6 (887\alpha^2 - 2879) \right. \\
	& + j_{\alpha,k}^4 (125671\alpha^4 -729422\alpha^2 + 1456807) \\
	& + 3 j_{\alpha,k}^2 (63299\alpha^6 - 507801\alpha^4 + 1678761\alpha^2 -2201939)  \\
	& \left. + 2(107959\alpha^8 - 1146220\alpha^6 + 5095482\alpha^4 -10087180\alpha^2 + 6029959)\right]+ \mathcal{O}(n^{-11}).
\end{align}
The inequality $x_{k} > j_{\alpha,k}^2(4n+2\alpha+2)^{-1}$ \cite[(18.16.10)]{DLMF} already provides an $\mathcal{O}(n^{-2})$ relative error and shows that the remainder has to be positive. The first two terms are exactly those found by Tricomi \cite[(37)]{tricBess}, where the error bound was reported to be $\mathcal{O}(n^{-4})$, while \cite{gatt} reported a stricter $\mathcal{O}(n^{-5})$.

The corresponding expansion for the weights is
\begin{align}
	w_k & = \frac{4 x_k^\alpha \, \ee^{-x_k}}{J_{\alpha-1}^2(j_{\alpha,k})(4n + 2\alpha +2)} \left(1 +  \frac{2(\alpha^2 + j_{\alpha,k}^2 -1)}{3(4n + 2\alpha +2)^2} \right. \label{EexplLagWeiLeft} \\ 
	&  + \frac{1}{45 (4n + 2\alpha +2)^4} \left[ 46\alpha^4 + 33 j_{\alpha,k}^4 +6 j_{\alpha,k}^2 (11\alpha^2 -19) -140\alpha^2 +94 \right] \\
	& +  \frac{4}{3^4 35 (4n + 2\alpha +2)^6} \left[ 657 j_{\alpha,k}^6 + 27 j_{\alpha,k}^4 (73\alpha^2 - 181) \right. \\ 
	& \left. + j_{\alpha,k}^2 (2459\alpha^4 -10750\alpha^2 + 14051) +1493\alpha^6 - 9303\alpha^4 + 19887\alpha^2 - 12077 \right] \\
	& + \frac{1}{3^5 5^2 7 (4n + 2\alpha +2)^8} \left[ 215918\alpha^8 - 2292440\alpha^6 + 10190964\alpha^4 - 20174360\alpha^2 \right. \\
	&  + 12059918  + 53220 j_{\alpha,k}^8 + 240 j_{\alpha,k}^6 (887\alpha^2 - 2879)  + 3 j_{\alpha,k}^4 (125671\alpha^4 - 729422\alpha^2 + 1456807)  \\
	& \left. \left. + 6 j_{\alpha,k}^2 (63299\alpha^6 - 507801\alpha^4 + 1678761\alpha^2 -2201939)  \right] + \mathcal{O}(n^{-10}) \vphantom{\frac{2(\alpha^2 + j_{\alpha,k}^2 -1)}{3(4n + 2\alpha +2)^2}} \right).
\end{align}
Note that the $\mathcal{O}(n^{-10})$ term is inside the parenthesis, i.e., the asymptotic order is relative to the size of the leading order term.

\paragraph{Bulk.}
For the asymptotics in the bulk, one first has to determine the roots of the leading order term of the asymptotic expansion of the polynomials as expressed by \cref{Eleading_order}. For standard associated Laguerre this results in having to solve \cref{EtranscStdLag} to obtain $t_k$. A computational procedure is described in \cref{subsect_transcendental}.

The asymptotic expansion of the nodes in the bulk (region $\Ri$ in \cref{Fregions}) we arrive at is:
\begin{align}
	x_k & = ( 4 n +2 \alpha +2 ) t_k -\frac{ 1 }{ 12 ( 4 n + 2\alpha +2 ) } \left( \frac{ 5 }{ (1 -t_k)^2 } -\frac{ 4 }{ 1 -t_k } + 12 \alpha^2 -4 \right) \label{EnodeLagBulk} \\
	& + \frac{1-t_k}{720 t_k (4n + 2\alpha +2 )^3} \left(\frac{1600}{(1-t_k)^6} - \frac{3815}{(1-t_k)^5} + \frac{2814}{(1-t_k)^4} - \frac{576}{(1-t_k)^3} \right. \\
	& \left. -\frac{16}{(1-t_k)^2}  +16(15\alpha^4 - 30\alpha^2 + 7) \left[2-\frac{3}{1-t_k} \right] \right) \\
	& + \frac{-(1-t_k)^2}{2^6  3^4  35 t_k^2 (4n+2\alpha+2)^5} \left[\frac{-1727136}{(1-t_k)^5}+ \frac{16131880}{(1-t_k)^6} + \frac{-48469876}{(1-t_k)^7}  \right. \\
	& + 175 \left(\frac{379569}{(1-t_k)^8} + \frac{-246416}{(1-t_k)^9} + \frac{61700}{(1-t_k)^{10}} \right)  +  4608 \frac{3+2t_k}{1-t_k}\left(31 -147 \alpha^2 +105 \alpha^4 -21 \alpha^6 \right) \\
	& + \frac{384}{(1-t_k)^2}\left( -1346 +6405 \alpha^2 -4620 \alpha^4 +945 \alpha^6 \right) \\
	& \left.  +\frac{320}{(1-t_k)^3} \left( -43 +126 \alpha^2 -63 \alpha^4 \right)  +\frac{80}{(1-t_k)^4} \left( -221 -630 \alpha^2 +315 \alpha^4 \right)\right] \\
	& + \frac{(1-t_k)^3}{2^8  3^5 5^2 7 t_k^3 (4n+2\alpha+2)^7} \left[ \frac{43222750000}{(1-t_k)^{14} } - \frac{241928673000}{(1-t_k)^{13} } + \frac{566519158800}{(1-t_k)^{12} } \right. \\
	&  -\frac{714465642135}{(1-t_k)^{11} }+ \frac{518401904799}{ (1-t_k)^{10} } - \frac{212307298152}{ (1-t_k)^9 } + \frac{672}{ (1-t_k)^8 } \left( 12000\alpha^4 - 24000\alpha^2 +64957561 \right)  \\
	& - \frac{192}{(1-t_k)^7}\left(103425\alpha^4 -206850\alpha^2 + 15948182\right) + \frac{3360}{(1-t_k)^6}\left(4521\alpha^4 - 9042\alpha^2 - 7823\right) \\
	& - \frac{1792}{(1-t_k)^5}\left(3375\alpha^6 - 13905\alpha^4 + 17685\alpha^2 - 1598\right)+ \frac{16128}{(1-t_k)^4}\left(450\alpha^6 - 2155\alpha^4 + 2960\alpha^2 - 641\right) \\
	& -\frac{768}{(1-t_k)^3}\left(70875\alpha^8 - 631260\alpha^6 + 2163630\alpha^4 - 2716980\alpha^2 +555239\right) \\
	& + \frac{768}{(1-t_k)^2}\left(143325\alpha^8 - 1324260\alpha^6 + 4613070\alpha^4 -5826660\alpha^2 + 1193053\right) \\
	& - \frac{5806080}{1-t_k}\left(15\alpha^8 -140\alpha^6 + 490\alpha^4 - 620\alpha^2 + 127\right) + 24883200\alpha^8 -232243200\alpha^6 \\
	& \left. + 812851200\alpha^4 - 1028505600\alpha^2 +210677760 \vphantom{\frac{43222750000}{(1-t_k)^{14} }} \right] + \mathcal{O}(n^{-9}).
\end{align}
The first two terms in \cref{EnodeLagBulk} are exactly those found by Tricomi \cite[(56)]{tricBulk}, where the error bound was reported to be $\mathcal{O}(n^{-2})$, while \cite{gatt} reported a stricter $\mathcal{O}(n^{-3})$.

The corresponding expansion for the weights is
\begin{align}
	w_k & = x_k^\alpha \, \ee^{-x_k} 2\pi\sqrt{\frac{t_k}{1-t_k}} \left(1 - \frac{1}{6(4n +2\alpha +2)^2}\left[ 5 (1-t_k)^{-3} - 2(1-t_k)^{-2} \right]  \right. \label{EexplLagWeiBulk} \\ 
	&  + \frac{(1-t_k)^2}{720(4n +2\alpha +2)^4 t_k^2}\left[ \frac{8000}{(1-t_k)^8} - \frac{24860}{(1-t_k)^7} + \frac{27517}{(1-t_k)^6} - \frac{12408}{(1-t_k)^5} + \frac{1712}{(1-t_k)^4} \right. \\
	& \left. + \frac{32}{(1-t_k)^3} + \frac{16}{(1-t_k)^2} (15\alpha^4 - 30\alpha^2 + 7) \right] \\
	& - \frac{(1-t_k)^3}{90720(4n +2\alpha +2)^6 t_k^3} \left[\frac{43190000}{(1-t_k)^{12}} -\frac{204917300}{(1-t_k)^{11}} + \frac{393326325}{(1-t_k)^{10}} - \frac{386872990}{(1-t_k)^9} \right. \\
	& + \frac{201908326}{(1-t_k)^8} - \frac{50986344}{(1-t_k)^7}  +\frac{80}{ (1-t_k)^6 } \left( 315\alpha^4 - 630\alpha^2 + 53752 \right)  \\
	& - \frac{320}{ (1-t_k)^5 } \left(189\alpha^4 -378\alpha^2 - 89\right) + \frac{480}{ (1-t_k)^4 } \left( 63\alpha^4 - 126\alpha^2 + 43 \right) \\
	& -\frac{384}{(1-t_k)^3} \left( 315\alpha^6 - 1470\alpha^4 + 1995\alpha^2 - 416 \right) \\
	& \left. \left. + \frac{2304}{(1-t_k)^2} \left(21\alpha^6 -105\alpha^4 + 147\alpha^2 - 31\right) \right] \vphantom{\frac{1}{1} } + \mathcal{O}(n^{-8}) \right).
\end{align}

\paragraph{Soft edge.} The soft edge for the standard associated Gauss--Laguerre rule is around $x=4n$. The weights rapidly decay in this regime and even underflow for moderate $n$. For that reason we restrict the number of terms given here, but we note that more terms could be computed \cite{opsomer2018phd,codeQuadr}.

The expansion of the large nodes in region $\Rright$ is, where $a_k$ are the zeros of the Airy function,
\begin{align}
	x_k & = \left(4n+2\alpha+2\right) + 2^{2/3} a_{n-k+1} \left(4n+2\alpha+2\right)^{1/3} +\frac{2^{4/3}}{5} a_{n-k+1}^2 \left(4n+2\alpha+2\right)^{-1/3} \label{EnodeLagRight} \\ 
	& + \left(\frac{11}{35} -\alpha^2 -\frac{12}{175} a_{n-k+1}^3\right) \left(4n+2\alpha+2\right)^{-1} \\
	& + \left(\frac{16}{1575} a_{n-k+1} + \frac{92}{7875} a_{n-k+1}^4\right) 2^{2/3} \left(4n+2\alpha+2\right)^{-5/3} \\
	&  - \left(\frac{15152}{3031875} a_{n-k+1}^5 + \frac{1088}{121275} a_{n-k+1}^2 \right) 2^{1/3} \left(4n+2\alpha+2\right)^{-7/3} + \mathcal{O}(n^{-3}).
\end{align}
This expression was adapted from \cite[(4.9)]{gatt}, while we derived the following leading order term of the weight: 
\begin{equation}
 w_k = 4^{1/3} x_k^{\alpha+1/3} \exp(-x_k) \airy'(a_{n-k+1})^{-2} \left[ 1 + \mathcal{O}(n^{-2/3}) \right]. \label{EexplLagWeiRight}
\end{equation}
Here, $\airy'$ represents the derivative of the Airy function. Like above, the $\mathcal{O}$ term is in the parentheses, hence relative accuracy is obtained.

\subsection{Gauss--Jacobi}
\label{app:jacobi}

\paragraph{Hard edge at $x=-1$.}
Let $j_{\beta,k}$ be the $k$-th zero of the Bessel function $J_\beta$ of order $\beta$. Then we have the following terms, up to $\mathcal{O}(n^{-12})$:
\begin{align}
	x_k^{\qj\rl} & \sim -1 +\frac{2j_{\beta,k}^2}{ (2n+\alpha+\beta+1)^2} + \frac{-2j_{\beta,k}^2}{3 (2n+\alpha+\beta+1)^4} \left\{ \vphantom{\frac{-j_{\beta,k}^2}{6 (2n+\alpha+\beta+1)^5}} j_{\beta,k}^2 - 3 \alpha ^2 - \beta^2 + 1 \right\} \label{EexkStdJacBes} \\
	& + \frac{2 j_{\beta,k}^2}{45 (2n+\alpha+\beta+1)^6} \left\{ \vphantom{\frac{-j_{\beta,k}^2}{6 (2n+\alpha+\beta+1)^5}} 2 j_{\beta,k}^4 - 3j_{\beta,k}^2(5\alpha^2 +3\beta^2 - 2) +45\alpha^4 + 7\beta^4 + 20(3\alpha^2 - 1)\beta^2 - 60\alpha^2 + 13 \vphantom{\frac{-j_{\beta,k}^2}{6 (2n+\alpha+\beta+1)^5}} \right\} \\
	& + \frac{ - 2 j_{\beta,k}^2}{2835 (2n+\alpha+\beta+1)^8} \left\{ \vphantom{\frac{-j_{\beta,k}^2}{6 (2n+\alpha+\beta+1)^5}} 9j_{\beta,k}^6 - 18(7\alpha^2 + 5\beta^2 - 3)j_{\beta,k}^4 - 2835\alpha^6 - 247\beta^6 \right. \\
	&  + (328\beta^4 + [1512\alpha^2- 575]\beta^2 + 567\alpha^2 - 113)j_{\beta,k}^2   - 1407(3\alpha^2 - 1)\beta^4 + 8505\alpha^4 \\
	& \left. - 21(405\alpha^4 - 600\alpha^2 +133)\beta^2 - 8379\alpha^2 + 1633 \vphantom{\frac{-j_{\beta,k}^2}{6 (2n+\alpha+\beta+1)^5}} \right\} \\ 
	& + \frac{4 j_{\beta,k}^2}{42525 (2n+\alpha+\beta+1)^{10}} \left\{ \vphantom{\frac{-j_{\beta,k}^2}{6 (2n+\alpha+\beta+1)^5}}  6j_{\beta,k}^8 - 15(9\alpha^2 + 7\beta^2 - 4)j_{\beta,k}^6 + \left[6615\alpha^4 + 769\beta^4 + 2(1620\alpha^2 -589)\beta^2 \right. \right. \\
	& \left. - 12150\alpha^2 + 2668\right]j_{\beta,k}^4 + 3 \big[ 9450\alpha^6 - 999\beta^6 - 23(400\alpha^2 - 147)\beta^4 -40635\alpha^4 \\
	& - (2835\alpha^4 + 1850\alpha^2 - 294)\beta^2 + 50650\alpha^2 - 10236 \big] j_{\beta,k}^2 \\
	& + 42525\alpha^8 + 2327\beta^8 + 22340(3\alpha^2 - 1)\beta^6 - 226800\alpha^6 + 168(1530\alpha^4 - 2415\alpha^2 + 542)\beta^4 + 517860\alpha^4 \\
	& \left. + 20(11340\alpha^6 - 38745\alpha^4 + 42399\alpha^2 - 8488)\beta^2 - 509280\alpha^2 +98717 \vphantom{\frac{-j_{\beta,k}^2}{6 (2n+\alpha+\beta+1)^5}} \right\}  + \mathcal{O}(n^{-12}),
\end{align}

The corresponding weights are, up to $\mathcal{O}(n^{-8})$:
\begin{align}
	\frac{w_k^{\qj\rl}}{w(x_k^{\qj\rl})} & \sim \frac{8}{ J_{\beta-1}^2 (j_{\beta,k}) [2n+\alpha+\beta+1]^2} \bigg[ 1 + \frac{3 \alpha ^2 + \beta^2 - 1 -2j_{\beta,k}^2}{3 [2n+\alpha+\beta+1]^2} \label{EewkStdJacBes} \\
	& + \frac{ 45\alpha^4 + 7\beta^4 + 6 j_{\beta,k}^4 +20(3\alpha^2 - 1)\beta^2 - 6(5\alpha^2 + 3\beta^2 - 2) j_{\beta,k}^2 - 60\alpha^2 + 13  }{45 [2n+\alpha+\beta+1]^4}  \\
	& + \frac{1}{2835  [2n+\alpha+\beta+1]^6} \Big\{  2835\alpha^6 + 247\beta^6 - 36 j_{\beta,k}^6 + 1407(3\alpha^2 - 1)\beta^4  \\
	& +54(7\alpha^2 + 5\beta^2 - 3) j_{\beta,k}^4 - 8505\alpha^4 + 21(405\alpha^4 - 600\alpha^2 +133)\beta^2 \\
	& - 2\left[328\beta^4 + (1512\alpha^2 - 575)\beta^2 + 567\alpha^2 - 113\right]j_{\beta,k}^2 + 8379\alpha^2 - 1633 \Big\} + \mathcal{O}(n^{-8}) \bigg].
\end{align}

\paragraph{The bulk.}
The nodes in the interior of the interval expand asymptotically up to $\mathcal{O}(n^{-10})$ as
\begin{align}
	x_k & \sim t_k + \frac{2\alpha^2 - 2\beta^2 + (2\alpha^2 + 2\beta^2 - 1)t_k}{2[2n + \alpha + \beta + 1]^2}  + \frac{1}{24 (2n + \alpha + \beta + 1)^4 (1-t_k^2)} \label{EexkStdJacBulk} \\
	& \left(\vphantom{\frac{1^2}{x^2}} 32\alpha^4 - 32\beta^4 - \left[16\alpha^4 + 16\beta^4 + 4(12\alpha^2 -
5)\beta^2 - 20\alpha^2 + 5\right]t_k^3 - 24(\alpha^2 - \beta^2)t_k^2 - 40\alpha^2 + 40\beta^2 \right. \\
	& \left. + 3\left[16\alpha^4 + 16\beta^4 + 4(4\alpha^2 - 7)\beta^2 - 28\alpha^2 + 11\right]t_k \vphantom{\frac{1^2}{x^2}}\right) \\ 
	& + \frac{1}{240 (2n + \alpha + \beta + 1)^6 (1-t_k^2)^2} \left( \vphantom{\frac{1^2}{x^2}} 576\alpha^6 - 576\beta^6 \right. \\
	& + \left[96\alpha^6 + 96\beta^6 + 80(8\alpha^2 - 3)\beta^4 - 240\alpha^4 + 2(320\alpha^4 - 440\alpha^2 + 101)\beta^2 + 202\alpha^2 - 39\right]t_k^5  \\
	& -320(\alpha^2 - 6)\beta^4 + 240(\alpha^2 - \beta^2)t_k^4 - 1920\alpha^4 + 16(20\alpha^4 - 127)\beta^2 \\
	& -10\left[32(5\alpha^2 + 3)\beta^4 + 96\alpha^4 + 2(80\alpha^4 - 152\alpha^2 - 97)\beta^2- 194\alpha^2 + 99\right]t_k^3 \\
	& + 160\left[6\alpha^6 - 6\beta^6 +2(\alpha^2 + 15)\beta^4 - 30\alpha^4 - (2\alpha^4 + 41)\beta^2 + 41\alpha^2\right]t_k^2 + 2032\alpha^2 \\
	& \left. + 15\left[96\alpha^6 + 96\beta^6 + 16(4\alpha^2 - 23)\beta^4 - 368\alpha^4 + 2(32\alpha^4 - 72\alpha^2 + 223)\beta^2 + 446\alpha^2 - 173\right]t_k \vphantom{\frac{1^2}{x^2}} \right) \\ 
	& - \frac{1}{40320 (2n + \alpha + \beta + 1)^8 (1-t_k^2)^3} \left( \vphantom{\frac{1^2}{x^2}}  219648\alpha^8 - 219648\beta^8 - \big[9728\alpha^8 + 9728\beta^8 \right. \\
	& + 896(138\alpha^2 - 49)\beta^6 - 43904\alpha^6 + 224(1160\alpha^4 - 1720\alpha^2 + 389)\beta^4 \\
	& + 87136\alpha^4 + 8(15456\alpha^6 -48160\alpha^4 + 49364\alpha^2 - 9785)\beta^2 - 78280\alpha^2 + 14921\big]t_k^7 \\
	& - 10752(14\alpha^2 - 127)\beta^6 -40320(\alpha^2 - \beta^2)t_k^6 - 1365504\alpha^6 \\
	& + 21\big[2048\alpha^8 + 2048\beta^8 + 128(146\alpha^2 -123)\beta^6  - 15744\alpha^6 + 32(1320\alpha^4 - 1760\alpha^2 + 2023)\beta^4 + 64736\alpha^4 \\
	& + 8(2336\alpha^6- 7040\alpha^4 + 3644\alpha^2 - 11275)\beta^2 - 90200\alpha^2 + 37111\big]t_k^5 + 75264(5\alpha^2 - 49)\beta^4  \\
	& + 4480\big[44\alpha^8 - 44\beta^8 + 8(3\alpha^2 + 55)\beta^6 - 440\alpha^6 - 24(7\alpha^2 + 72)\beta^4 +1728\alpha^4 \\
	& - (24\alpha^6 - 168\alpha^4 - 2405)\beta^2 - 2405\alpha^2\big]t_k^4 \\
	& + 3687936\alpha^4 + 105\big[6656\alpha^8 + 6656\beta^8 - 128(50\alpha^2 + 443)\beta^6 - 56704\alpha^6 - 32(296\alpha^4 -696\alpha^2 - 6027)\beta^4 \\
	& + 192864\alpha^4 - 8(800\alpha^6 - 2784\alpha^4 + 3580\alpha^2 + 30285)\beta^2 -242280\alpha^2 + 99933\big]t_k^3 \\
	& + 384(392\alpha^6 - 980\alpha^4 + 10527)\beta^2 + 2688 \big[ 424\alpha^8 - 424\beta^8+ 4(4\alpha^2 + 783)\beta^6 \\
	& - 3132\alpha^6 + 4(35\alpha^2 - 2407)\beta^4 + 9628\alpha^4 - (16\alpha^6 +140\alpha^4 - 11429)\beta^2 - 11429\alpha^2 \big] t_k^2 \\
	&  - 4042368\alpha^2 + 35\big[23552\alpha^8 + 23552\beta^8 +128(90\alpha^2 - 1231)\beta^6 - 157568\alpha^6 + 32(328\alpha^4 - 1376\alpha^2 + 14095)\beta^4  \\
& \left. + 451040\alpha^4 + 8(1440\alpha^6 - 5504\alpha^4 + 9964\alpha^2 - 65439)\beta^2 - 523512\alpha^2 +206379\big]t_k \vphantom{\frac{1^2}{x^2}} \right) + \mathcal{O}(n^{-10}).
\end{align}

Up to $\mathcal{O}(n^{-8})$ relative accuracy, relative to the value of the weight function $w(x_k)$ at the corresponding node, the weights expand as
\begin{align}
	\frac{w_k}{w(x_k)} & \sim \frac{\pi\sqrt{1-t_k^2}}{2n + \alpha + \beta + 1} \bigg[ 2 - \frac{1 - 2\alpha^2 - 2\beta^2}{ (2n + \alpha + \beta + 1)^2}  \label{EewkStdJacBulk} \\
	&  +\frac{1}{ 12 (t_k^2-1)^2 (2n + \alpha + \beta + 1)^4} \bigg( \big[16\alpha^4 + 16\beta^4 + 4(12\alpha^2 - 5)\beta^2 - 20\alpha^2 + 5\big]t_k^4  + 48\alpha^4 + 48\beta^4 \\
	& + 12(4\alpha^2 - 7)\beta^2 - 6\big[4(4\alpha^2 + 1)\beta^2 + 4\alpha^2 - 3\big]t_k^2 - 84\alpha^2 + 64\big[\alpha^4 - \beta^4 - 2\alpha^2 + 2\beta^2\big]t_k +33\bigg) \\
	& +\frac{1}{120 (t_k^2-1)^3 (2n + \alpha + \beta + 1)^6} \bigg(\big[96\alpha^6 + 96\beta^6 + 80(8\alpha^2 - 3)\beta^4 - 240\alpha^4 \\
	& + 2(320\alpha^4 - 440\alpha^2 +101)\beta^2  + 202\alpha^2 - 39\big] t_k^6 - 1440\alpha^6 - 1440\beta^6 - 240(4\alpha^2 - 23)\beta^4 \\
	&  - 5\big[96\alpha^6+ 96\beta^6  + 16(20\alpha^2 - 27)\beta^4 - 432\alpha^4 + 2(160\alpha^4 - 136\alpha^2 + 295)\beta^2 + 590\alpha^2 - 237\big] t_k^4 \\
	& + 5520\alpha^4 - 640\big[3\alpha^6 - 3\beta^6 + (\alpha^2 + 15)\beta^4 - 15\alpha^4 -(\alpha^4 + 22)\beta^2 + 22\alpha^2\big]t_k^3 \\
	& - 30(32\alpha^4 - 72\alpha^2 + 223)\beta^2 - 15\big[288\alpha^6 +288\beta^6  - 16(8\alpha^2 + 81)\beta^4 - 1296\alpha^4 \\
	& - 2(64\alpha^4 - 88\alpha^2 - 863)\beta^2 +1726\alpha^2 - 717\big]t_k^2 \\
	&  - 6690\alpha^2 - 128\big[ 33\alpha^6 - 33\beta^6 - 5(\alpha^2 - 27)\beta^4 - 135\alpha^4 +(5\alpha^4 - 166)\beta^2 + 166\alpha^2 \big] t_k + 2595 \bigg)+ \mathcal{O}(n^{-8}) \bigg].
\end{align}

\bibliographystyle{abbrv}
\bibliography{references}

\end{document}